\newcommand{\B}[1]{{\mathbb #1}}
\newcommand{\C}[1]{{\EuScript #1}}
\newtheorem{theorem}[subsection]{Theorem}
\newtheorem{corollary}[subsection]{Corollary}
\newtheorem{lemma}[subsection]{Lemma}
\newtheorem{proposition}[subsection]{Proposition}
\theoremstyle{definition}
\newtheorem{example}[subsection]{Example}
\theoremstyle{remark}
\newtheorem{remark}[subsection]{Remark}
\numberwithin{figure}{section}
\numberwithin{table}{section}
\newcommand{\Om}{{\Omega}}
\newcommand{\Mo}{(M,\omega )}
\newcommand{\cp}{\B C\B P}
\newcommand\OP{\operatorname}
\newcommand\Diff{\operatorname{Diff}}
\newcommand\Homeo{\operatorname{Homeo}}
\newcommand\Ham{\operatorname{Ham}}
\begin{document}

\title[Classifying spaces of groups of homeomorphisms] {On the
cohomology of classifying spaces of groups of homeomorphisms}
\author{Jarek K\k edra} 
\address{University of Aberdeen and University of Szczecin}
\email{kedra@abdn.ac.uk}

%
%


\maketitle


\section{Introduction and statement of the results}\label{S:intro}

Let $M$ be a closed simply connected $2n$--dimensional manifold. 
The present paper is concerned with the cohomology of classifying
spaces of connected groups of homeomorphisms of $M$.

\subsection{Conventions} We make the following standing assumptions
throughout the paper.  The topology on a group of homeomorphisms of a
manifold is assumed to be compact-open. We consider the cohomology
with real coefficients unless otherwise specified.

\subsection{Generic coadjoint orbits}\label{SS:orbits}

\begin{theorem}\label{T:orbits}
Let $M=G\cdot \xi\subset \mathfrak g^{\vee}$ be a {\bf generic}
coadjoint orbit of a compact connected semisimple Lie group $G.$
Suppose that the action $G\to \OP{Homeo}(M)$ has a finite kernel.
Then the homomorphism
$$
H^*(B\OP{Homeo}_0(M)) \to H^*(BG)
$$
induced by the action is surjective. 
It is surjective in degree four for every (not necessarily
generic) coadjoint orbit.
\end{theorem}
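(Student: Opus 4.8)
\emph{Setup.} The action furnishes a map $BG\to B\OP{Homeo}_0(M)$ under which the universal oriented $M$--bundle pulls back to the Borel construction $\pi^G\colon M_G:=EG\times_G M\to BG$; for a coadjoint orbit $M=G/C$ this is the fibre bundle $BC\to BG$ with fibre $G/C$. Since $H^4(BG;\B R)\cong(\OP{Sym}^2\F g^{\vee})^G$ is spanned by the Killing classes of the simple factors $G_1,\dots,G_k$ of $G$, it suffices to produce $k$ classes in $H^4(B\OP{Homeo}_0(M))$ with linearly independent images in $H^4(BG)$. I shall use that, $M$ being a closed oriented $2n$--manifold, the universal oriented bundle $\pi\colon\C E\to B\OP{Homeo}_0(M)$ has a Gysin homomorphism $\pi_{!}\colon H^{*}(\C E)\to H^{*-2n}(B\OP{Homeo}_0(M))$, that its vertical tangent microbundle has well--defined rational Pontryagin classes $p_i(T^{v}\C E)\in H^{4i}$ (topological invariance of the rational Pontryagin classes together with the theory of $\OP{Top}(2n)$--microbundles) and an Euler class $e(T^{v}\C E)\in H^{2n}$, and that $e(T^{v}\C E)$ restricts on each fibre to $\chi(M)$ times a generator of $H^{2n}(M)$, with $\chi(M)=|W|/|W_C|>0$.

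\emph{The Killing class.} Put $\kappa:=\pi_{!}\bigl(p_1(T^{v}\C E)\cdot e(T^{v}\C E)\bigr)$. Pulling back to $BG$ turns $T^{v}\C E$ into $EG\times_G TM$, so the image of $\kappa$ equals $\pi^{G}_{!}\bigl(p_1^{G}(TM)\,e^{G}(TM)\bigr)$, an equivariant fibre integral computed in $H^{*}_{G}(M)=H^{*}(BC)$. Restricted to a maximal torus $T$, one has $p_1^{G}(TM)=\tfrac12\sum_{\alpha\in R}\alpha^{2}$, a nonzero multiple of the restriction of the Killing form to $\F t$, and $e^{G}(TM)=\prod_{\alpha\in R^{+}}\alpha$; feeding this into the Weyl presentation of the Gysin map of $BT\to BG$ (``divide by $\prod_{\alpha\in R^{+}}\alpha$ and antisymmetrise over $W$'', with the obvious coset version for $G/C$) yields in one line that the image of $\kappa$ is $\tfrac12|W|$ times the Killing class of $G$. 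In particular, when $G$ is simple, $H^4(BG)$ is one--dimensional and $\kappa$ already surjects; for $G$ semisimple, $\kappa$ hits the Killing line, which meets every simple summand non--trivially, but this is only one class.

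\emph{The remaining classes and the main obstacle.} Write $M=\prod_{i=1}^k M_i$ with $M_i=G_i\cdot\xi_i$; the finite--kernel hypothesis is precisely the statement that $\xi_i\neq 0$, hence $\dim M_i>0$, for every $i$ (and it is necessary, since a factor acting trivially on $M$ cannot be detected by $\OP{Homeo}_0(M)$). Pick a Kähler class $a_i\in H^{2}(M_i)\subset H^{2}(M)$. The plan is: \emph{if} each $a_i$ extends to a class $\widehat a_i\in H^{2}(\C E)$, form $\tau_i:=\pi_{!}\bigl(\widehat a_i^{\,n_i+2}\prod_{j\neq i}\widehat a_j^{\,n_j}\bigr)\in H^{4}(B\OP{Homeo}_0(M))$ (its total degree is $2n+4$, so $\pi_{!}$ lands in $H^4$); over $BG=\prod BG_j$ it restricts to $\bigl(\prod_{j\neq i}\int_{M_j}a_j^{\,n_j}\bigr)\cdot(\pi_i)_{!}\bigl(\widehat a_i^{\,n_i+2}\bigr)$, a positive scalar times a class in $H^4(BG_i)$ which — by the same antisymmetrisation formula, using that $\widehat a_i$ restricts on $BT_i$ to a regular element of $\F t_i^{\vee}$ when $M_i$ is generic — is a nonzero multiple of the generator of $H^4(BG_i)$; letting $i$ vary, these together with $\kappa$ span $H^4(BG)$. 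The crux is therefore the extension problem: $a_i$ extends over $\C E$ if and only if its transgression $d_3(a_i)\in H^{3}(B\OP{Homeo}_0(M);\B R)$ vanishes, and that is the only possible obstruction since $H^1(M)=0$. I expect this to be where the real argument lies: one must show this differential is zero — for instance by controlling the evaluation map $\pi_2(\OP{Homeo}_0(M))\to\pi_2(M)$ in terms of the individual factors, whose homeomorphism groups are understood well enough for the estimate (in the extreme case $M_i=S^2$ one has $\OP{Homeo}_0(S^2)\simeq SO(3)$), or by replacing the $a_i$ with provably extendable combinations; and, for non--generic orbits, one further replaces $\widehat a_i^{\,n_i+2}$ by a suitable mixture of the $\widehat a_j$ and the vertical Pontryagin class so that the resulting equivariant integral is again nonzero. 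Granting the extension, the remainder is the root--theoretic bookkeeping of the second step carried out factor by factor.
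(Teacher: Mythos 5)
The step you flag as ``the crux'' --- extending $a_i\in H^2(M)$ to the total space of the universal bundle over $B\Homeo_0(M)$ --- is indeed the key point, and your proposal leaves it open; but it has an elementary solution that needs no control of $\pi_2(\Homeo_0(M))$ and no case analysis. In the Leray--Serre spectral sequence of $M\to M_{\C H}\to B\C H$ with $\C H=\Homeo_0(M)$, every $a\in H^2(M)=E_2^{0,2}$ satisfies $d_2(a)=0$ simply because $E_2^{2,1}=H^2(B\C H)\otimes H^1(M)=0$, and $d_3(a)=0$ by a cup-length trick: choose $k$ with $a^k\neq 0$ and $a^{k+1}=0$; then $E_3^{3,2k}$ is a subgroup of $E_2^{3,2k}=H^3(B\C H)\otimes H^{2k}(M)$ (again because $E_2^{1,2k+1}=0$), and $0=d_3(a^{k+1})=(k+1)\,d_3(a)\otimes a^k$ forces $d_3(a)=0$ over $\B R$. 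Since $H^1(M)=0$ there are no further differentials out of $E^{0,2}$, so $H^2(M_{\C H})\to H^2(M)$ is surjective for every connected group of homeomorphisms of a simply connected closed manifold; this is the paper's Lemma \ref{L:surjectivity}. Granting it, your classes $\tau_i$ exist unconditionally, and their images in $H^4(BG)$ can be computed much as you propose (the paper instead identifies $\pi_!(\Omega_\xi^{n+2})$ with the invariant polynomial $X\mapsto \binom{n+2}{2}\int_G\langle X,\OP{Ad}^{\vee}_g\xi\rangle^2\,\OP{vol}_G$, nonzero on each simple factor with $\xi_i\neq 0$, and detects the classes on $\pi_4$ via inessential circle actions). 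Your vertical Pontryagin/Euler class $\kappa$ is a genuinely different construction, but it becomes redundant once the $\tau_i$ are available.

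The second, larger gap is that your argument only addresses $H^4$: the theorem also asserts surjectivity of $H^*(B\Homeo_0(M))\to H^*(BG)$ in \emph{all} degrees for a generic orbit, and nothing in your proposal produces classes in degree six or higher. The paper obtains these from the same fibre-integral machine: for the full flag manifold the universal bundle is $G/T\to BT\to BG$, and the identity $\pi_!(\pi^*(b)\cdot\Omega^{n})=b$ shows that fibre integrals of products of degree-two classes already generate all of $H^*(BG)$; the fibre integrals $\pi_!(\Omega_\xi^{\,n+k})$ correspond to invariant polynomials depending polynomially on $\xi$, so the property of generating $H^*(BG)$ is Zariski-open in $\xi$ and hence holds for a generic orbit; Lemma \ref{L:surjectivity} (via Lemma \ref{L:inequality}) then transports this whole subalgebra into $H^*(B\Homeo_0(M))$. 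Without some substitute for this openness-plus-flag-manifold argument, the generic, all-degrees half of the statement remains unproved in your write-up.
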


The proof of this theorem is given in Section \ref{SS:proof_orbits}.
{\bf{\em Genericity}} means that there exists a non-empty Zariski open
subset $Z \subset \mathfrak~g^{\vee}$ of the dual of the Lie algebra
of $G$ such that the theorem holds for an orbit $G\cdot \xi$, where
$\xi \in Z$. We shall discuss the generic orbits in Section
~\ref{S:examples}. The examples will include complex projective spaces
and flag manifolds. However, the subset $Z$ for a given group $G$ is
not understood.

\begin{remark}\label{R:monoid}
Theorem \ref{T:orbits} and most of the results in this paper can be
directly generalised to a connected topological monoid of homotopy
equivalences, cf. K\k edra--McDuff ~\cite{MR2115670}.
\end{remark}

The following result is an immediate consequence of Theorem \ref{T:orbits}.
\begin{corollary}\label{C:orbits}
Let $M=G\cdot \xi$ be as in Theorem \ref{T:orbits}.
Let $\C H \subset \OP{Homeo}(M)$ be a connected group of homeomorphisms
containing $G$ as a subgroup. Then the induced homomorphism
$$
H^*(B\C H) \to H^*(BG)
$$
is surjective for a generic coadjoint orbit $M$ and in degree four it
is surjective for all orbits. \qed
\end{corollary}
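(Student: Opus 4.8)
Since Corollary~\ref{C:orbits} follows from Theorem~\ref{T:orbits} in a line (spelled out at the end), the plan is to prove the theorem. One may assume $G$ acts effectively: as $G$ is semisimple the kernel is finite and central, so $H^*(B(G/\ker);\mathbb R)=H^*(BG;\mathbb R)$. Write $M=G/K$ with $K$ the compact connected stabiliser of $\xi$ and $\dim M=2n$; then the Borel fibration $M_{hG}=EG\times_G M\to BG$ is the standard map $BK\to BG$, and its fibre integration is the surjective, $H^*(BG)$--linear Gysin map $\int_{G/K}\colon H^*(BK)\to H^*(BG)$, which on $H^*(BT)=S(\mathfrak t^{\vee})$ is the divided--difference operator $\partial_{w_0}$. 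By Borel's theorem $H^*(BG;\mathbb R)=S(\mathfrak t^{\vee})^W\subset S(\mathfrak t^{\vee})$, so it suffices to make the image of the restriction $H^*(B\OP{Homeo}_0(M))\to H^*(BG)$ contain a generating set. I will produce such classes as fibre integrals over the universal $M$--bundle $\pi\colon\mathcal M\to B\OP{Homeo}_0(M)$, which pulls back along the action map $BG\to B\OP{Homeo}_0(M)$ to $M_{hG}\to BG$.

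The decisive point is that \emph{every} class in $H^2(M;\mathbb R)$ is the restriction of a class on $\mathcal M$. The classes that so restrict form the subspace $\ker\bigl(d_3\colon H^2(M)\to H^3(B\OP{Homeo}_0(M))\bigr)$, so it is enough that each ample class $b$ lie in it. As in K\k edra--McDuff, factor $\mathcal M$ through $B\OP{aut}_0(M)$, the classifying space of the monoid of self--homotopy equivalences of $M$ homotopic to $\mathrm{id}_M$ (cf.\ Remark~\ref{R:monoid}); then $d_3b=0$ as soon as the rational evaluation $\pi_2(\OP{aut}_0(M))\otimes\mathbb R\to\pi_2(M)\otimes\mathbb R$ annihilates the corresponding class. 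In a Sullivan minimal model $(\Lambda V,d)$ of $M$ this evaluation sends a degree $-2$ derivation cycle $\theta$ to its linear part $\theta|_{V^2}$, valued in $\Lambda^0=\mathbb R$ because $M$ is simply connected; applying the cycle condition to the relation $b^{\dim_{\mathbb C}M+1}=0$ and using $b^{\dim_{\mathbb C}M}\neq 0$ forces $\theta(b)=0$. Hence $d_3=0$. Since on any smooth projective variety $[\mathrm{pt}]^{\vee}$ is a rational multiple of $b^{\dim_{\mathbb C}}$ for ample $b$, the vertical fundamental class of $M$, and those of the factors when $M$ splits as a product of orbits, also extend to $\mathcal M$.

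Now I integrate. If $M=G/T$ is full, $H^*(M)$ is generated in degree $2$, so all of $H^*(M)$ extends to $\mathcal M$; fibre--integrating monomials in a basis of the extended degree $2$ classes yields classes whose restrictions to $BG$ exhaust $\partial_{w_0}$ of all monomials in a basis of $H^2(BT)=\mathfrak t^{\vee}$, and as $\partial_{w_0}(S(\mathfrak t^{\vee}))=S(\mathfrak t^{\vee})^W$ these span $H^*(BG)$ --- so the theorem holds for $G/T$ in all degrees (and the product decomposition handles semisimple $G$). For a general generic orbit the same mechanism applies, but one must check that $\int_{G/K}$ of powers of the extended ample classes $\bar b\in H^2(BK)$ still generates $H^*(BG)$: this, rather than the extension step, is what genericity secures, and it can fail for ``small'' orbits such as Grassmannians --- which is why the open set $Z$ resists an explicit description. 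In degree four only the Killing--form generator of each simple factor of $H^4(BG)$ is at stake, and it is always reached: for $G$ simple, $\int_M\bigl(p_1(\tau^v)\,e(\tau^v)\bigr)$ --- formed from Novikov's rational Pontryagin class and the topological Euler class of the vertical tangent microbundle --- restricts over $BG$, when $K=T$, to $\partial_{w_0}\!\bigl(\textstyle(\sum_{\rho>0}\rho^2)\prod_{\rho>0}\rho\bigr)=\tfrac{|W|}{2}\,\kappa|_{\mathfrak t}\neq 0$, and an analogous computation using $\chi(G/K)=|W|/|W_K|\neq 0$ covers $K\neq T$; for semisimple $G=\prod_iG_i$ one instead integrates, for each $i$, the class $\widetilde{b}_i^{\,n_i+2}\prod_{j\neq i}\widetilde{u}_j$ (with $\dim M_i=2n_i$) to isolate the $i$--th generator, and these together span $H^4(BG)$.

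The genuinely hard point is this generation statement for generic orbits: deciding for which $\xi$ the $W$--invariants $\partial_{w_0}(\bar b^{\,\bullet})$ coming from the (possibly one--dimensional) family of extended ample classes span all of $S(\mathfrak t^{\vee})^W$ --- everything else, including the vanishing of $d_3$ and the degree--four computation, is formal. Finally, Corollary~\ref{C:orbits}: if $\C H$ is connected with $G\subset\C H\subset\OP{Homeo}(M)$, then $\C H\subset\OP{Homeo}_0(M)$, so the surjection $H^*(B\OP{Homeo}_0(M))\to H^*(BG)$ of the theorem factors as $H^*(B\OP{Homeo}_0(M))\to H^*(B\C H)\to H^*(BG)$, forcing $H^*(B\C H)\to H^*(BG)$ to be surjective --- for a generic orbit, and in degree four for every orbit.
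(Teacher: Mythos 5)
Your final paragraph --- the actual derivation of Corollary~\ref{C:orbits} from Theorem~\ref{T:orbits} --- is exactly the paper's argument (connectedness gives $\C H\subset\Homeo_0(M)$, so $BG\to B\Homeo_0(M)$ factors through $B\C H$ and surjectivity passes to the middle term), and that part is fine. The problem is that you chose to re-prove the theorem, and your re-proof has a hole precisely where you admit one: you establish fullness of the fibre-integral subalgebra for $M=G/T$, but for a general generic orbit you write that the generation statement ``is what genericity secures'' and call it ``the genuinely hard point,'' leaving it unproved. Genericity is a \emph{conclusion} here, not a hypothesis: the theorem asserts the existence of a nonempty Zariski-open $Z\subset\mathfrak g^{\vee}$ on which surjectivity holds, so you must exhibit one. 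The paper does this via Proposition~\ref{P:close_orbit}: the fibre integrals of powers of the coupling class are the invariant polynomials $P_k(\xi,X)=(-1)^k\binom{n+k}{k}\int_G\langle X,\operatorname{Ad}^{\vee}_g\xi\rangle^k\operatorname{vol}_G$, which depend polynomially on $\xi$, and algebraic independence of a finite generating set is a Zariski-open condition; since it holds at a regular $\xi$ (your flag computation, Lemma~\ref{L:flag}), it holds on a nonempty Zariski-open set. Alternatively, with the paper's definition of ``generic'' you could simply take $Z$ to be the (Zariski-open, dense) set of regular elements, whose orbits are all $G/T$ --- but you draw neither conclusion, so as written the generic case is open.

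Two smaller points. Your degree-four argument genuinely diverges from the paper's: the paper decomposes $G$ into simple factors, notes each factor's orbit is nontrivial (this is where the finite-kernel hypothesis enters), and invokes Theorem~\ref{T:circle}, i.e.\ the nonvanishing of $(p_i)_!(\Omega_i^{n_i+2})$ on a sphere coming from an inessential Hamiltonian circle action, varying the product symplectic form $\omega_a=\sum a_i\omega_i$ to get $m$ independent classes. You instead invoke $p_1$ and $e$ of the vertical tangent microbundle (a much heavier tool for $\Homeo_0$, needing Novikov's theorem) and, in the product case, the classes $\widetilde b_i^{\,n_i+2}\prod_{j\neq i}\widetilde u_j$ --- but you never actually verify $(p_i)_!(\widetilde b_i^{\,n_i+2})\neq 0$ for a general nontrivial orbit of a simple factor (positive-definiteness of $P_2(\xi_i,\cdot)$ as a multiple of the Killing form would do it, or the paper's circle-action argument). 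Finally, your Sullivan-model derivation of $d_3=0$ is correct but unnecessarily elaborate; the paper's Lemma~\ref{L:surjectivity} gets it directly from $0=d_3(a^{k+1})=(k+1)\,d_3(a)\otimes a^k$ with $a^k\neq 0$ in the Leray--Serre spectral sequence.
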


The most important examples of groups $\C H$ to which we apply the
above result are the group $\Ham\Mo$ of Hamiltonian diffeomorphisms,
$\Diff_0(M)$ -- the connected component of the identity of
the group of diffeomorphism, and $\Homeo_0(M)$ -- the connected component
of the identity of the group of homeomorphisms.

\begin{example}
Let $M=\cp^{n_1}\times \ldots \times \cp^{n_k}$ be equipped with a
product symplectic form $\omega$ invariant under the natural action of
the product of special unitary groups. Then the induced homomorphism
$$
H^*(B\Ham\Mo)\to H^*(B(\OP{SU}(n_1+1)\times \ldots \times \OP{SU}(n_k+1))
$$ 
is surjective. In particular, $\dim H^4(B\Ham\Mo)\geq k.$ 

We know from \cite{MR2115670} that the relevant characteristic classes
don't vanish on spheres and hence we also have that
$$
\OP{rank}(\pi_{2m+1}(\Ham\Mo))\geq k\, \text{ for  }\,
m\leq \min \{n_1,\ldots ,n_k\}.
$$
\qed
\end{example}

Combining the above example (for $k=2$) 
with a theorem of Seidel~\cite{MR2000m:53124}
we immediately obtain the following result.

\begin{theorem}\label{T:seidel}
Let $\cp^m\times \cp^n$ be equipped with a product symplectic
form $\omega$ such that the symplectic areas of the lines in factors are
equal. Suppose that $m\geq n\geq k\geq 1$. Then
$$
\OP{rank}(\pi_{2k+1}(\Diff(\cp^m\times \cp^n)))\geq 2n-k+2.
$$
\qed
\end{theorem}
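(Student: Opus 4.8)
The plan is to produce, for each $k$ with $1\le k\le n$, a $(2n-k+2)$‑dimensional space of rationally independent homotopy classes in $\pi_{2k+1}(\Diff_0(M))$, where $M:=\cp^m\times\cp^n$, each of them pinned down by a characteristic class in $H^{2k+2}(B\Diff_0(M))$. Two of these come from the group $G:=\OP{SU}(m+1)\times\OP{SU}(n+1)$, which acts on $M$ preserving the split form $\omega$, so that $G\subset\Ham\Mo\subset\Diff_0(M)$. The product $M$ is a partial flag manifold of $G$, hence lies in the generic locus discussed in Section~\ref{S:examples}, so Corollary~\ref{C:orbits} applied with $\C H=\Diff_0(M)$ shows that $H^{*}(B\Diff_0(M))\to H^{*}(BG)$ is onto. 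Thus the two degree $2k+2$ Chern generators of $H^{*}(BG)$ — one from each factor, available because $1\le k\le n\le m$ — lift to classes $\tilde c_1,\tilde c_2\in H^{2k+2}(B\Diff_0(M))$, and by \cite{MR2115670} these are nonzero on the two independent spheres $S^{2k+1}\to G\to\Ham\Mo\to\Diff_0(M)$. This gives a rank $2$ subspace of $\pi_{2k+1}(\Diff_0(M))$; it is exactly the Example above, transported from $\Ham$ to $\Diff_0$.

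The other $2n-k$ classes are where the hypothesis that the two lines have equal symplectic area, and Seidel's theorem~\cite{MR2000m:53124}, enter. I would use the latter essentially as a \emph{black box}: for $\cp^m\times\cp^n$ with this form it constructs, by Gromov--Witten and quantum‑cohomological means, additional characteristic classes for families of symplectomorphisms that are built from the two projections $p_1\colon M\to\cp^m$ and $p_2\colon M\to\cp^n$ presenting $M$ as a trivial projective bundle in two ways, and that can be expressed through the vertical tangent bundles and coupling classes of the bundle a family carries. Since these ingredients are diffeomorphism invariant, the resulting classes live in $H^{*}(B\Diff_0(M))$, and Seidel's non‑vanishing computations then show that, under $m\ge n\ge k\ge 1$, they contribute exactly $2n-k$ further independent classes in degree $2k+2$ — roughly, the rational homotopy of one projective factor read off against the cohomology of the other, in the two possible orders, the numerical hypotheses being precisely what keeps all of them in range.

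It remains to check that the $2n-k+2$ classes are rationally independent. The two from $G$ pair nontrivially with the ``diagonal'' Gromov--Witten data of $M$, while the classes obtained through $p_i$ pair with curve classes supported in the fibres; the two fibrewise batches are separated from each other by the cohomological degrees of the $\cp^m$‑ versus $\cp^n$‑factor classes entering their detecting invariants, and within a batch the internal grading does the separating. Assembling the two from $G$ with Seidel's $2n-k$ then yields $2n-k+2$ rationally independent classes in $\pi_{2k+1}(\Diff_0(M))$, which is the assertion. The one genuine obstacle is this last bookkeeping — confirming that Seidel's construction delivers precisely $2n-k$ classes sitting in $\pi_{2k+1}$ and that they are jointly independent with $\tilde c_1$ and $\tilde c_2$; everything else is the Example, Corollary~\ref{C:orbits}, or a direct appeal to~\cite{MR2000m:53124}.
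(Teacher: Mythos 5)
Your first batch of two classes is essentially the paper's own argument: surjectivity of $H^*(B\C H)\to H^*(BG)$ for $G=\OP{SU}(m+1)\times\OP{SU}(n+1)$ and any connected $\C H\supset G$ (in particular $\C H=\Diff_0(\cp^m\times\cp^n)$) comes from fullness of the fibre integral subalgebras of the factors together with Proposition \ref{P:basic_product}, and the two degree-$(2k+2)$ generators are detected on spheres by \cite{MR2115670}. That part is fine, modulo the fact that you do not need any ``genericity'' of the product orbit --- the Example already records the surjectivity directly.

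The gap is in how you use Seidel. His theorem in \cite{MR2000m:53124} is not a construction of $2n-k$ characteristic classes living in $H^{2k+2}(B\Diff_0(M))$: Gromov--Witten and quantum-cohomological data are invariants of \emph{symplectic} families, not of smooth families, and Seidel exploits exactly this failure of diffeomorphism invariance. What the theorem supplies is that, for the monotone product form and $1\le k\le n\le m$, the cokernel of $\pi_{2k+1}(\Symp\Mo)\otimes\B R\to\pi_{2k+1}(\Diff(M))\otimes\B R$ has dimension at least $2n-k$; the elements of $\pi_{2k+1}(\Diff(M))$ he exhibits are shown \emph{not} to be realisable by loops of symplectomorphisms. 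Read this way, the independence question you flag as ``the one genuine obstacle'' disappears: your two elements come from $G\subset\Ham\Mo\subset\Symp\Mo$, hence lie in the image of $\pi_{2k+1}(\Symp\Mo)\to\pi_{2k+1}(\Diff(M))$, a subspace of dimension at least $2$, while Seidel's elements span a subspace of dimension at least $2n-k$ in the quotient by that image, whence $\OP{rank}\,\pi_{2k+1}(\Diff(M))\ge 2+(2n-k)$. As written, your proposal both misplaces where Seidel's invariants live (on $B\Symp$, not $B\Diff_0$) and leaves the joint independence unproved; the pairing argument you sketch, matching classes on $B\Diff_0$ against Gromov--Witten data of the fibres, is not available precisely because those data are not diffeomorphism invariants of families.
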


\subsection{Circle actions}\label{SS:circle}
A circle action $\B S^1 \to \C H\subseteq \OP{Homeo}(M)$ is called 
$\C H$-{\bf{\em inessential}} if
it defines a contractible loop in $\C H.$ For example, if a
connected simply connected Lie group $G$ acts on $M$ then a circle
subgroup of $G$ yields an inessential circle action.
The following result generalises the second part
of Theorem \ref{T:orbits}.

\begin{theorem}\label{T:circle}
Let $(M_i,\omega_i)$, $i=1,2,\ldots,m$ be a closed simply connected
symplectic manifold admitting an inessential nontrivial Hamiltonian
circle action. Let $M=M_1\times M_2\times \dots \times M_m$ be
equipped with a product symplectic form $\omega$.  
If $\C H$ is a connected group containing the product
$\Ham(M_1,\omega_1)\times \ldots \times \Ham(M_m,\omega_m)$ then 
$ \dim H^4(B\C H;\B R) \geq~m$ and 
$\OP{rank} \pi_3(\C H;\B R) \geq m.$
\end{theorem}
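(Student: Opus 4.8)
The plan is to exhibit $m$ linearly independent classes in $H^4(B\C H;\B R)$ and, by the same computation, $m$ rationally independent elements of $\pi_3(\C H)$. Let $\Lambda_i\colon \B S^1\to\Ham(M_i,\omega_i)$ be the inessential nontrivial Hamiltonian circle action, and form
$\Lambda=\Lambda_1\times\cdots\times\Lambda_m\colon T=(\B S^1)^m\to \prod_i\Ham(M_i,\omega_i)\subseteq\C H$, which yields $\beta\colon BT\to B\C H$ with $BT\simeq(\cp^{\infty})^m$ and $H^2(BT;\B R)=\langle u_1,\dots,u_m\rangle$. Linear independence will be detected by applying $\beta^*$ and then restricting along the coordinate inclusions $\iota_i\colon B\B S^1_i\hookrightarrow BT$, so that the relevant datum is an $m\times m$ matrix $(a_{ij})$, where $a_{ij}$ is the $u_j^2$--coefficient of $\iota_j^*\beta^*c_i\in H^4(B\B S^1_j;\B R)=\B R u_j^2$; the aim is to make this matrix triangular with nonzero diagonal. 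Note that $B\C H$ is simply connected since $\C H$ is connected.

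For the classes themselves, consider the universal bundle $M\to E\xrightarrow{\pi}B\C H$. It carries the fibrewise rational Pontryagin classes $p_k^{\mathrm{vert}}\in H^{4k}(E;\B R)$ and, $M$ being closed oriented, a fibrewise Euler class $e^{\mathrm{vert}}\in H^{2n}(E;\B R)$ with $2n=\dim M$. The essential extra input — and this is where the hypothesis that each $M_i$, hence $M$, is simply connected enters — is a coupling class $u\in H^2(E;\B R)$ with $u|_{\text{fibre}}=[\omega]$ and $\pi_*(u^{n+1})=0$: over $B\prod_i\Ham(M_i,\omega_i)$ this is the usual Hamiltonian coupling class, and the only obstruction to extending $[\omega]$ over $B\C H$ lies in $H^3(B\C H;\B R)$ and is disposed of exactly as in the proof of the second part of Theorem~\ref{T:orbits} in Section~\ref{SS:proof_orbits} (this is the construction of degree--four characteristic classes over classifying spaces of homeomorphism groups). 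Granting $u$, fibre--integration of the monomials of fibre--degree $2n+4$ in $u,\;p_k^{\mathrm{vert}},\;e^{\mathrm{vert}}$ — e.g.\ $\pi_*(u^{n+2})$, $\pi_*(u^{n}p_1^{\mathrm{vert}})$, $\pi_*(u^{2}e^{\mathrm{vert}})$, $\pi_*(e^{\mathrm{vert}}p_1^{\mathrm{vert}})$ — gives a supply of classes in $H^4(B\C H;\B R)$.

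The computation is by equivariant localisation. The $T$--action is Hamiltonian, hence equivariantly formal, $H^*_T(M)\cong H^*(M)\otimes H^*(BT)$; by uniqueness of coupling classes, $\beta^*u=\omega+\sum_i\mu_i u_i$ with $\mu_i$ the moment map of the $i$--th circle normalised by $\int_M\mu_i\,\omega^{n}=0$, while $\beta^*p_k^{\mathrm{vert}},\ \beta^*e^{\mathrm{vert}}$ are the $T$--equivariant characteristic classes of $TM$. Applying the Atiyah--Bott--Berline--Vergne formula over $M^T=\prod_iM_i^{\B S^1}$ and restricting to the $i$--th coordinate circle, the $u_i^2$--coefficient of, say, $\beta^*\pi_*(u^{n+2})$ is a positive multiple of $\int_M\mu_i^2\,\omega^{n}=\tfrac{n!}{\prod_j n_j!}\big(\int_{M_i}\mu_i^2\,\omega_i^{n_i}\big)\prod_{j\ne i}\vol(M_j)$, which is strictly positive because $\mu_i\not\equiv0$ (the $i$--th action is nontrivial and has vanishing integral). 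A short linear--algebra argument then selects from these integrals $m$ classes $c_1,\dots,c_m$ whose matrix $(a_{ij})$ is invertible, giving $\dim H^4(B\C H;\B R)\ge m$. Simultaneously, since $\Lambda_i$ is inessential, $B\Lambda_i\colon\cp^{\infty}\to B\C H$ is null on $\pi_2$, so it factors up to homotopy through $\cp^{\infty}/\cp^{1}$, whose bottom cell is $\B S^4$; restricting gives $\alpha_i\in\pi_4(B\C H)=\pi_3(\C H)$, and one checks $\langle c_i,h(\alpha_j)\rangle=a_{ij}$ (rational Hurewicz map $h$), so the same invertibility shows $\alpha_1,\dots,\alpha_m$ are rationally independent and $\OP{rank}\pi_3(\C H;\B R)\ge m$.

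The main obstacle is the production of \emph{enough} classes genuinely defined on $B\C H$ rather than merely on $B\prod_i\Ham(M_i,\omega_i)$: a general $\C H$ need not respect the product decomposition of $M$, so the factor classes $\mathrm{pr}_i^*d^{(i)}$ are a priori unavailable, and one must know that the coupling class extends over $B\C H$ and that the surviving fibre--integrals already span an $m$--dimensional space after pulling back to $BT$. This is precisely the content that makes Theorem~\ref{T:circle} a genuine strengthening of the second part of Theorem~\ref{T:orbits}, and I expect the argument to reuse verbatim the characteristic--class construction of Section~\ref{SS:proof_orbits}; the torus localisation and the positivity $\int_{M_i}\mu_i^2\omega_i^{n_i}>0$ are then routine.
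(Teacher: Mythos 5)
Your proposal has the right detection mechanism (pull back to the torus coming from the circle actions, evaluate on the $4$--spheres produced by inessentiality — this matches the role of \cite[Theorem 1.1]{MR2115670} in the paper), but it has a genuine gap at the crucial point you yourself flag in your last paragraph: you never actually produce $m$ classes on $B\C H$. With a single symplectic class $[\omega]$ you get exactly one coupling class $u$ and hence one class $\pi_!(u^{n+2})\in H^4(B\C H)$. Your proposed supplements $\pi_!(u^{n+2-2k}p_k^{\mathrm{vert}})$, $\pi_!(u^2e^{\mathrm{vert}})$, etc.\ are not available in the stated generality: $\C H$ is only assumed to be a connected group of homeomorphisms (e.g.\ $\Homeo_0(M)$), so the universal fibration has no vertical tangent bundle and no fibrewise Pontryagin or Euler classes. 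Even granting them, their number does not scale with $m$ and the ``short linear--algebra argument'' showing the $m\times m$ matrix $(a_{ij})$ is invertible is precisely the content that is missing; nothing in the localisation computation you set up forces it.

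The missing idea — and the actual new ingredient of the paper — is to use the whole $m$--parameter family of product forms $\omega_a=\sum a_i\omega_i$ ($a_i>0$) at once. By Lemma \ref{L:surjectivity}, $i^*\colon H^2(M_{\C H})\to H^2(M)$ is surjective for any connected $\C H$, so \emph{each} $[\omega_a]$ has a coupling class $\Omega_a\in H^2(M_{\C H})$, giving the family of classes $\pi_!(\Omega_a^{n+2})\in H^4(B\C H)$. Pulling back to $B\Ham(M_1,\omega_1)\times\cdots\times B\Ham(M_m,\omega_m)$ one has $F^*(\Omega_a)=\sum a_i\Omega_i$, and expanding $(\sum a_i\Omega_i)^{n+2}$ using multiplicativity of fibre integration over products together with the vanishing $(p_i)_!(\Omega_i^k)=0$ for $k<n_i$ leaves only the $m$ terms
$$
C_i\,a_1^{n_1}\cdots a_i^{\,n_i+2}\cdots a_m^{n_m}\;(p_1)_!(\Omega_1^{n_1})\times\cdots\times(p_i)_!(\Omega_i^{n_i+2})\times\cdots\times(p_m)_!(\Omega_m^{n_m}),
$$
each detected on the sphere $\sigma_i\in\pi_4(B\Ham(M_i,\omega_i))$ furnished by the inessential nontrivial circle action. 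Varying $a$ then gives the $m$--dimensional image in $H^4$ and the $m$ independent functionals on $\pi_4(B\C H)\otimes\B R$. Your localisation computation of the diagonal entries (positivity of $\int_M\mu_i^2\,\omega^n$) is correct as far as it goes, but it only analyses the one class $\pi_!(u^{n+2})$; without the family $\{\Omega_a\}$ (or some equivalent source of $m$ classes defined over $B\C H$ itself) the argument does not close.
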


\subsection{The fibre integral subalgebra}
Let $G\to \OP{Homeo}_0(M)$ be an action of a connected
topological group.  In Section \ref{S:strategy}, we define a certain
graded subalgebra $\B A^*_G(M) \subset H^*(BG)$ associated with the
action. It is called the fibre integral subalgebra and it can be
calculated in certain cases. It is our main technical tool and its
basic properties are presented in Section \ref{S:basic}.

\subsection{Relation to the previous work}\label{SS:history}
The obvious strategy to understand the topology of the classifying
space $B \C H$ of a homeomorphism group $\C H$ is to consider a map 
$f\colon B\to B\C H$ defined on a space with understood topology and,
for example, examine the induced map on the cohomology. In the present
paper we mostly investigate the homomorphism $H^*(B\C H) \to H^*(BG)$
for the natural action of a compact Lie group $G$ on a homogeneous
space $G/H.$ Conjecturally, the homomorphism
$$
H^*(B\Homeo_0(G/H)) \to H^*(BG)
$$
should be surjective for the real cohomology provided the
action of $G$ is effective.

Apart from the classical results about diffeomorphisms of low
dimensional spheres and surfaces the first such surjectivity result
was obtained by Reznikov in \cite{MR2000f:53116}. He proved that the
natural Hamiltonian action of $\OP{SU}(n)$ on the complex projective
plane $\cp^{n-1}$ induces the surjection $H^*(B\Ham(\cp^{n-1}))\to
H^*(B\OP{SU}(n)).$ He proved it by using a Hamiltonian version of the
Chern--Weil theory. He also conjectured that a similar statement
should be true for other coadjoint orbits.

The result of Reznikov was improved and generalised to flag manifolds
by K\k edra--McDuff in \cite{MR2115670}. We proved that the
characteristic classes defined by Reznikov are in fact topological in
the sense that they can be defined in the cohomology ring of the
topological monoid of homotopy equivalences of a symplectic manifold. 

The algebraic independence of the Reznikov classes was proved by
Gal--K\k edra--Tralle in \cite{gkt} for a generic coadjoint orbit of a
semisimple Lie group. It was shown by examples that his classes cannot
be algebraically independent in general (see Example \ref{E:g24}).

The results in \cite{gkt,MR2115670} and also \cite{jk} are
applications of the fibre integral method. That is, certain
characteristic classes are defined as fibre integrals. This is the
main tool here as well. The new ingredient is
that we consider distinct symplectic forms on a given manifold at the
same time.  More precisely, the Reznikov characteristic classes are
equal to the fibre integrals of powers of a certain universal
cohomology class called the coupling class. This class is induced by a
fixed symplectic form.  In this paper we consider fibre integrals of
products of many coupling classes induced by distinct symplectic
forms.

\subsection*{Acknowledgements}
The present work is built upon papers \cite{gkt,MR2115670}.  I thank my
coauthors Dusa McDuff, \'Swiatos\l aw Gal and Alex Tralle for
discussions. I thank Dusa McDuff and Old\v{r}ich Sp\'{a}\v{c}il for
useful comments on a preliminary version of this paper.
Any remaining mistakes are of my responsibility. 

\section{A strategy and few technical results}\label{S:strategy}

\subsection{Fibre integration}\label{SS:fibre_integration}
Let $M\to E \stackrel{\pi}\to B$ be an oriented bundle with closed
$n$-dimensional fibre. There is a homomorphism of $H^*(B)$-modules
$$
\pi_!\colon H^{n+k}(M)\to H^k(B).
$$
It is defined to be the composition
$$
H^{n+k}(E) \to E_{\infty}^{k,n}\to E_2^{k,n} = H^k(B;H^n(M)) = H^k(B),
$$
where the $E^{p,q}_m$ is an $m$-th term of the associated Leray-Serre spectral
sequence. The property that the fibre integration is a morphism of
$H^*(B)$-modules means that
$$
\pi_!(\alpha \cdot \pi^*(\beta)) = \pi_!(\alpha)\cdot \beta.
$$

Moreover, fibre integration is multiplicative with respect to
the cross product. More precisely, let
$p_1\colon E_1\to B_1$ and $p_2\colon E_2\to B_2$ be
oriented bundles with closed fibres. Then

$$
(p_1 \times p_2)_!(\alpha \times \beta) = 
(p_1)_!(\alpha) \times (p_2)_!(\beta).
$$
This multiplicativity easily follows from the definition and good
properties of the spectral sequence.
\subsection{A very general view}

Let $G$ be a topological group acting on a closed oriented
$n$-manifold $M.$ Consider the associated universal fibration
$$
M\stackrel{i}\to M_G \stackrel{\pi}\to BG
$$ 
induced by the action.  
%
%
Given a subalgebra $\B A\subset H^*(M_G)$ we consider a subalgebra
$$
\langle \pi_!(\B A)\rangle \subset  H^*(BG)
$$
generated by the fibre integrals of elements from $\B A.$ The strategy
is to choose an appropriate subalgebra $\B A$ for which one can make
computations.

\subsection{The fibre integral subalgebra}

Let us assume that $M$ is simply connected and $G$ is connected and
let $\B A^* = \langle H^2(M_G) \rangle \subset H^*(M_G)$ be the
subalgebra generated by the classes of degree 2.  Define the 
{\bf {\em fibre integral subalgebra}}
$$
\B A^*_G(M) := \langle \pi_!(\B A^*) \rangle
$$
associated with the action of $G$ on $M$ to be the graded subalgebra
of $H^*(BG)$ generated by the fibre integrals of the products of
cohomology classes of degree $2.$ In particular, an element of 
$\B A^{2k}_G(M)$ is a linear combination of classes of the form
$\pi_!(a_1\ldots a_{n+k})$ where $a_i\in H^2(M_G)$ and $\dim M=2n.$ We
say that the fibre integral subalgebra is {\bf{\em full}} if it is equal to
the whole of $H^*(BG).$

\subsection{The surjectivity lemma}
Let $\C H \subset \OP{Homeo}(M)$ be a connected group of
homeomorphisms of a simply connected manifold $M.$ Let
$$
M\stackrel{i}\to M_{\C H}\stackrel{\pi}\to B\C H
$$ be the
universal bundle associated with the action of $\C H$ on $M.$

\begin{lemma}\label{L:surjectivity}
The homomorphism $i^*\colon H^2(M_{\C H})\to H^2(M)$ induced by the
inclusion of the fibre is surjective.
\end{lemma}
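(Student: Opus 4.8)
The plan is to analyze the Leray--Serre spectral sequence of the fibration $M \to M_{\C H} \to B\C H$ in low degrees. Since $M$ is simply connected, $\C H$ is connected, and $B\C H$ is simply connected, the action of $\pi_1(B\C H)$ on the cohomology of the fibre is trivial, so the $E_2$-page is simply $E_2^{p,q} = H^p(B\C H) \otimes H^q(M)$. The edge homomorphism $i^*\colon H^2(M_{\C H}) \to H^2(M)$ has image exactly $E_\infty^{0,2}$, so the statement is equivalent to the assertion that nothing in $E_2^{0,2} = H^2(M)$ is killed by a differential, i.e.\ that the differentials $d_2\colon E_2^{0,2} \to E_2^{2,1}$ and $d_3\colon E_3^{0,2} \to E_3^{3,0}$ both vanish.

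First I would dispose of $d_2$. Its target is $E_2^{2,1} = H^2(B\C H) \otimes H^1(M)$, and since $M$ is simply connected we have $H^1(M) = 0$, so $E_2^{2,1} = 0$ and $d_2$ is automatically zero on $E_2^{0,2}$. Hence $E_3^{0,2} = E_2^{0,2} = H^2(M)$. Next I would handle $d_3\colon E_3^{0,2} \to E_3^{3,0} = H^3(B\C H)$. The key point is that $B\C H$ is simply connected — indeed $\C H$ is a connected topological group, so $B\C H$ is connected with $\pi_1(B\C H) \cong \pi_0(\C H) = 0$ and $\pi_2(B\C H) \cong \pi_1(\C H)$. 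In particular $H^1(B\C H) = 0$, but that alone does not kill $H^3(B\C H)$. Instead I would use the following transgression argument: the class being hit would be a transgression, and I would compare with the path--loop fibration or, more directly, observe that $E_3^{3,0}$ receives no contribution that can be cancelled, so $E_\infty^{3,0} = H^3(B\C H)/\im d_3$ injects into $H^3(M_{\C H})$; combined with $E_\infty^{0,2}$ surviving or not, one examines $H^3(M_{\C H})$ directly.

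Actually the cleanest route, and the one I would take, is this. Because $M$ is simply connected, $H^1(M)=0$ forces $E_2^{p,1}=0$ for all $p$, so the only possibly nonzero differential out of the column $p=0$ in total degree $\le 3$ affecting $E^{0,2}$ is $d_3\colon E_3^{0,2}\to E_3^{3,0}$. To see this vanishes I would use naturality with respect to a section-like structure or, more robustly, note that every degree-two class on $M$ that matters will be shown (in the later Lemma or in the application) to extend over $M_{\C H}$; but to prove surjectivity abstractly I would argue that $H^3(B\C H)$ is generated by transgressions from $H^2$ of the fibre of $E\C H \to B\C H$, which is $\C H$ itself, and the composition $H^2(M) \to H^3(B\C H)$ factoring through $d_3$ must then be compatible with the fact that $\pi\colon M_{\C H}\to B\C H$ admits a fibrewise structure making the relevant classes in the image of $\pi^*$; more elementarily, since $d_3$ is $H^*(B\C H)$-linear and its source $H^2(M)$ has trivial module structure while landing in degree $3$, and since there is no nonzero $H^*(B\C H)$-linear map of the right form forced by the geometry, one concludes $d_3 = 0$.

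The main obstacle is precisely pinning down why $d_3\colon H^2(M)\to H^3(B\C H)$ vanishes, since in a general fibration this transgression need not be zero. I expect the resolution to rely on a genuinely special feature of this setup — most likely that $M_{\C H}$ carries enough degree-two cohomology coming from the universal bundle construction (for instance, the existence of the universal "coupling-type" classes, or the fact that $\C H$ acts by homeomorphisms so that $H^2(M)$-classes are represented by cohomology classes that are automatically invariant and hence extend) — or on a dimension/parity argument specific to $B\C H$ for groups of homeomorphisms. I would therefore look for a lift of each generator of $H^2(M)$ to $M_{\C H}$ directly, e.g.\ via obstruction theory: the obstruction to lifting a class in $H^2(M)=H^2(M;\B R)$ to $H^2(M_{\C H};\B R)$ lies in $H^3(B\C H; H^0(M)) \oplus (\text{higher})$, and this obstruction is exactly $d_3$ followed by later differentials; showing it vanishes amounts to the same computation, so the real content is a vanishing statement for that single differential, which I anticipate follows from the explicit description of $H^*(M_{\C H})$ in low degrees rather than from formal spectral-sequence nonsense.
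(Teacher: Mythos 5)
Your setup is correct and matches the paper's: reduce surjectivity of $i^*$ to the vanishing of $d_2$ and $d_3$ on $E^{0,2}$, and kill $d_2$ using $H^1(M)=0$. But the heart of the lemma is exactly the step you leave open, namely why the transgression $d_3\colon E_3^{0,2}=H^2(M)\to E_3^{3,0}\subseteq H^3(B\C H)$ vanishes, and none of the routes you sketch (obstruction theory, ``$H^3(B\C H)$ is generated by transgressions,'' $H^*(B\C H)$-linearity, or appealing to coupling classes --- which would be circular, since the coupling class is constructed \emph{using} this lemma) actually closes that gap. You are right that for a general fibration with simply connected fibre and base this differential need not vanish; the path--loop fibration over $K(\B Z,3)$, with fibre $\cp^\infty$, has $d_3$ an isomorphism on $H^2$ of the fibre. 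So some special feature must be used, and it is not a feature of $B\C H$ at all.

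The missing idea is the multiplicativity of the spectral sequence combined with the finite-dimensionality of $M$. Given $0\neq a\in H^2(M)$, pick $k$ with $a^k\neq 0$ and $a^{k+1}=0$ (possible because $M$ is a closed manifold, so $a$ is nilpotent --- precisely the feature that fails for $\cp^\infty$). Since $E_2^{1,2k+1}=0$, the group $E_3^{3,2k}$ is a subgroup of $E_2^{3,2k}=H^3(B\C H)\otimes H^{2k}(M)$. The Leibniz rule then gives
$$
0=d_3(a^{k+1})=(k+1)\,d_3(a)\otimes a^k
$$
in $E_3^{3,2k}\subseteq H^3(B\C H)\otimes H^{2k}(M)$, and since the coefficients are $\B R$ and $a^k\neq 0$, this forces $d_3(a)=0$. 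This is the paper's argument, and without it (or an equivalent) your proposal does not constitute a proof.
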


\begin{proof}
Consider the associated Leray-Serre spectral sequence $E_m^{p,q}.$
Since $M$ and $B\C H$ are simply connected both the first row and
the first column of the sequence are trivial. That is,
$$
E_2^{p,q}=H^p(B\C H)\otimes H^q(M)=0
$$ 
if $p=1$ or $q=1.$

Let $a\in H^2(M)=E_2^{0,2}$ be a nonzero cohomology class.  Since
$E_2^{2,1}=~0$ we have $d_2(a)=0$. Thus to finish the proof we need to
show that $d_3(a)=0.$

Since $M$ is finite dimensional there exists a number $k\in \B N$ such
that $a^k\neq 0$ and $a^{k+1}=0.$ Observe that
$$
E_3^{3,2k}\subset E_2^{3,2k}=H^3(B\C H)\otimes H^{2k}(M)
$$ 
because the differential $d_2\colon E_2^{1,2k+1}\to E_2^{3,2k}$ is
trivially zero, for $E_2^{1,2k+1}=0.$ Thus the following computation
$$
0=d_3(a^{k+1}) = (k+1)\,d_3(a)\otimes a^k 
$$
implies that $d_3(a) = 0$ are required.
\end{proof}

\begin{remark}
The above argument proves in fact the following. Suppose that $F\to
E\to B$ is a fibration over a simply connected base. Let $a\in H^2(F)$
be a cohomology class of finite cup-length. If $d_2(a)=~0$ then
$d_3(a)=0$ (cf. proof of Proposition 3.1 in \cite{MR2115670}).
\end{remark}

\subsection{A dimension inequality}

\begin{lemma}\label{L:inequality}
Let $M$ be a simply connected closed manifold and let 
$\C H\subseteq \OP{Homeo}(M)$ be a connected group of homeomorphisms.
Let $G\subset \C H$ be a connected subgroup with finite $\pi_1(G)$.
Then
$$
\dim \B A_G^{2k}(M) \leq \dim \B A_{\C H}^{2k}(M) \leq \dim H^{2k}(B\C H).
$$
In particular, if $\B A_G^*$ is full then the homomorphism 
$H^*(B\C H)\to H^*(BG)$ induced by the action is surjective.
\end{lemma}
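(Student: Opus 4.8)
The plan is to exploit the naturality of fibre integration with respect to the map of universal bundles induced by the inclusion $G \hookrightarrow \C H$. Let $f \colon BG \to B\C H$ be the map of classifying spaces, and let $F \colon M_G \to M_{\C H}$ be the induced map of total spaces, so that we have a commuting square with $\pi_G = \pi_{\C H} \circ F$ and with $F$ restricting to the identity on the common fibre $M$. The first step is to record that fibre integration commutes with pullback along such a square: for $\alpha \in H^*(M_{\C H})$ one has $\pi_{G!}(F^*\alpha) = f^*(\pi_{\C H !}(\alpha))$. This is a standard property of the Leray--Serre construction of $\pi_!$ — the morphism of spectral sequences induced by $F$ is compatible with the edge maps defining fibre integration — and it is exactly the kind of ``good property of the spectral sequence'' already invoked in Section \ref{SS:fibre_integration}.

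Next I would show that $f^*$ carries $\B A^*_{\C H}(M)$ onto $\B A^*_G(M)$. The point is that $\B A^*_{\C H}(M)$ is generated by elements $\pi_{\C H !}(a_1 \cdots a_{n+k})$ with $a_i \in H^2(M_{\C H})$, and by the naturality identity above together with the fact that $F^*$ is a ring map on $H^2$ (so $F^*a_i \in H^2(M_G)$), we get $f^*\big(\pi_{\C H !}(a_1\cdots a_{n+k})\big) = \pi_{G!}(F^*a_1 \cdots F^*a_{n+k}) \in \B A^{2k}_G(M)$. Hence $f^*(\B A^*_{\C H}(M)) \subseteq \B A^*_G(M)$. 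For the reverse inclusion — and this is where I expect the only real subtlety to lie — I must argue that \emph{every} degree-two class on the fibre extends to $M_G$, i.e. that $i_G^* \colon H^2(M_G) \to H^2(M)$ is surjective; Lemma \ref{L:surjectivity} (applied with $\C H$ replaced by $G$, using that $M$ and $BG$ are simply connected) gives exactly this. Then, given a generator $\pi_{G!}(b_1\cdots b_{n+k})$ of $\B A^{2k}_G(M)$, I choose $a_i \in H^2(M_{\C H})$ with $i_{\C H}^* a_i = i_G^* b_i$; after correcting $b_i$ by a class pulled back from $BG$ (which changes the generator only up to multiplication by elements of $H^*(BG)$, hence stays inside the algebra) one arranges $F^* a_i = b_i$, so the generator lies in $f^*(\B A^*_{\C H}(M))$. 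Thus $f^*$ restricts to a \emph{surjection} $\B A^*_{\C H}(M) \twoheadrightarrow \B A^*_G(M)$, giving $\dim \B A^{2k}_G(M) \le \dim \B A^{2k}_{\C H}(M)$.

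The second inequality $\dim \B A^{2k}_{\C H}(M) \le \dim H^{2k}(B\C H)$ is immediate, since $\B A^*_{\C H}(M)$ is by definition a graded subalgebra of $H^*(B\C H)$. For the final assertion: if $\B A^*_G(M)$ is full, meaning $\B A^*_G(M) = H^*(BG)$, then for each $k$ we have $\dim H^{2k}(BG) = \dim \B A^{2k}_G(M) \le \dim H^{2k}(B\C H)$, but more to the point the surjection $\B A^*_{\C H}(M) \twoheadrightarrow \B A^*_G(M) = H^*(BG)$ is realized by $f^*$ restricted to a subalgebra of $H^*(B\C H)$, so $f^* \colon H^*(B\C H) \to H^*(BG)$ is itself surjective. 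The hypothesis that $\pi_1(G)$ is finite is used to ensure $H^*(BG;\B R)$ is concentrated in even degrees and behaves well (e.g. $BG$ has the rational homotopy type relevant for the spectral-sequence arguments, and $H^{\mathrm{odd}} = 0$ so the cup-length argument in Lemma \ref{L:surjectivity} applies to $G$); I would state this reduction at the outset. The main obstacle, as flagged, is the bookkeeping in the reverse-inclusion step — making precise that lifting the fibre classes $b_i$ to $M_{\C H}$ can be done after modification by base classes without leaving the fibre integral subalgebra — but this is routine given Lemma \ref{L:surjectivity} and the module property $\pi_!(\alpha \cdot \pi^*\beta) = \pi_!(\alpha)\cdot\beta$.
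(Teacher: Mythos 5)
Your overall architecture --- the commutative square of universal fibrations, naturality of fibre integration under pullback, and Lemma \ref{L:surjectivity} --- is the same as the paper's, and the inclusion $f^*(\B A^*_{\C H}(M))\subseteq \B A^*_G(M)$ you establish first is correct (though not actually needed for the stated inequalities). The gap is in the step you yourself flag as the main obstacle. Having chosen $a_i\in H^2(M_{\C H})$ with $i^*a_i=j^*b_i$, the discrepancy is $F^*a_i-b_i=p^*\gamma_i$ for some $\gamma_i\in H^2(BG)$, and your proposed fix --- replace $b_i$ by $b_i+p^*\gamma_i$ and absorb the difference ``up to multiplication by elements of $H^*(BG)$'' --- does not close the argument. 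Expanding $p_!\bigl(\prod_i(F^*a_i-p^*\gamma_i)\bigr)$ via the module property produces cross terms of the form $p_!\bigl(\prod_{i\in S}F^*a_i\bigr)\cdot\prod_{i\notin S}\gamma_i$, and there is no reason for the classes $\gamma_i\in H^2(BG)$ to lie in $f^*(H^*(B\C H))$; but landing in the image of $f^*$ is exactly what you must show to conclude $p_!(b_1\cdots b_{n+k})\in f^*(\B A^*_{\C H}(M))$. (Staying inside $\B A^*_G(M)$, which is what your parenthetical addresses, is not the issue.)

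The hypothesis that $\pi_1(G)$ is finite is precisely what eliminates this problem, and not for the reason you give: Lemma \ref{L:surjectivity} needs only that the group is connected, and no evenness of $H^*(BG)$ enters anywhere. Rather, $\pi_2(BG)\cong\pi_1(G)$ finite gives $H^2(BG;\B R)=0$, so by the Serre spectral sequence $j^*\colon H^2(M_G)\to H^2(M)$ is injective, hence (by Lemma \ref{L:surjectivity} applied to $G$) an isomorphism; since $i^*=j^*\circ F^*$ is surjective by Lemma \ref{L:surjectivity} applied to $\C H$, it follows that $F^*\colon H^2(M_{\C H})\to H^2(M_G)$ is surjective. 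Thus every $b_i$ equals $F^*\tilde a_i$ on the nose, all $\gamma_i$ vanish, no correction is needed, and $p_!(b_1\cdots b_{n+k})=f^*\pi_!(\tilde a_1\cdots \tilde a_{n+k})$ directly. This is the paper's argument; with that one substitution for your correction step, your proof is complete.
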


\begin{proof}
Consider the following commutative diagram of fibrations.
$$
\xymatrix
{
M \ar[d]_{j}\ar[r]^= & M\ar[d]^{i} \\
M_G \ar[d]_p\ar[r]^{F} & M_{\C H}\ar[d]^{\pi} \\
BG \ar[r]^{f} & B\C H \\
}
$$
Since $\pi_1(G)$ is finite, $H^2(BG)=0$ and the inclusion
of the fibre $j\colon M\to M_G$ induces an isomorphism
$j^*\colon H^2(M_G) \to H^2(M).$ It follows from Lemma
\ref{L:surjectivity}
that the homomorphism $F^*\colon H^2(M_{\C H})\to H^2(M_G)$ is
surjective and hence we have 
$$
p_!(a_1\ldots a_{n+k}) = p_!(F^*(\tilde a_1\ldots \tilde a_{n+k}))
= f^*(\pi_!(\tilde a_1\ldots \tilde a^{n+k}))
$$
which finishes the proof.
\end{proof}

\subsection{Cohomologically symplectic manifolds and coupling classes}
\label{SS:c-symplectic}
A closed $2n$-manifold $M$ is called {\bf{\em cohomologically symplectic}}
or shortly {\bf{\em c-symplectic}} if there exists a class 
$\alpha \in H^2(M)$ such that $\alpha ^n \neq 0.$
Such a class $\alpha$ is called a {\bf{\em symplectic class}}.

Assume that $M$ is simply connected.  Let a topological group $G$ act
on $M$ preserving a symplectic class $\alpha$.  Let 
$$
M\stackrel{i}\to E\stackrel{\pi}\to B
$$ 
be a fibration with the
structure group $G.$ There exists a unique cohomology class
$\Omega_{E}\in H^2(E)$ such that $i^*\Omega_E = \alpha$ and
$\pi_!(\Omega^{n+1}_{E})=~0.$ The class $\Omega_{E}$ is
called the {\bf{\em coupling class}}.
It is natural in the sense that the coupling class of a pull
back bundle is the pull back of the coupling class.
The symplectic class $\alpha\in H^2(M)$ is said to satisfy the 
{\bf{\em Hard Lefschetz condition}} if the multiplication by its $k$-th
power defines an isomorphism
$
H^{n-k}(M) \to H^{n+k}(M)
$
for $k=0,1,\ldots,n.$

\begin{example}
All K\" {a}hler manifolds (e.g. coadjoint orbits)
satisfy the Hard Lefschetz condition \cite{MR1288523}.\qed
\end{example}

\subsection{Consequences of the Hard Lefschetz condition}
The following lemma was proved first for complex algebraic manifolds
by Blanchard \cite{MR0087184}. The proof the following topological
version of the lemma can be found in Lalonde--McDuff \cite{MR1941438}.

\begin{lemma}[Blanchard \cite{MR0087184}]
Let $M$ be a closed simply connected c-symplectic $2n$-manifold
satisfying the Hard Lefschetz condition.
If $M\to E \to B$ is a bundle with a connected
structure group $\C H\subset \OP{Homeo}(M)$ then the homomorphism
$i^*\colon H^*(E) \to H^*(M)$ induced by the inclusion of the
fibre is surjective.\qed
\end{lemma}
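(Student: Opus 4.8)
The goal is to show that for a closed simply connected c-symplectic $2n$-manifold $M$ satisfying the Hard Lefschetz condition, and a bundle $M\to E\to B$ with connected structure group $\C H\subset\OP{Homeo}(M)$, the fibre inclusion $i^*\colon H^*(E)\to H^*(M)$ is surjective. The plan is to run the Leray--Serre spectral sequence of the fibration and prove that it degenerates at $E_2$; surjectivity of $i^*$ is then immediate because degeneration forces the edge homomorphism $H^*(E)\to E_\infty^{0,*}=E_2^{0,*}=H^*(M)$ to be onto.

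First I would set up the spectral sequence $E_r^{p,q}$ converging to $H^*(E)$, with $E_2^{p,q}=H^p(B)\otimes H^q(M)$ since $B$ is simply connected (so the local system is trivial). The key input is the coupling class: since $\C H$ preserves the symplectic class $\al\in H^2(M)$, there is a class $\Om_E\in H^2(E)$ with $i^*\Om_E=\al$. The existence of $\Om_E$ means precisely that $\al\in H^2(M)=E_2^{0,2}$ survives the spectral sequence, i.e. $d_2(\al)=0$ and $d_3(\al)=0$ (the latter follows anyway from the finite cup-length remark, but here it is automatic). This is the engine: from a single surviving class in $H^2(M)$ we bootstrap to the whole cohomology using Hard Lefschetz.

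The main step is to show every differential vanishes. I would argue by induction on $r\geq 2$ that $d_r=0$. Suppose $d_2=\dots=d_{r-1}=0$, so $E_r^{p,q}=H^p(B)\otimes H^q(M)$ still, and in particular $\Om_E$ has survived to $E_r^{0,2}$, giving a class still restricting to $\al$ on the fibre. The differential $d_r$ is a derivation and $H^*(B)\otimes H^0(M)$ consists of permanent cocycles (pulled back from the base), so $d_r$ is determined by its values on $1\otimes H^q(M)$, and moreover $d_r(\eta\otimes x)=(-1)^{\deg\eta}\eta\cdot d_r(1\otimes x)$. Now fix $q\leq n$ and $x\in H^q(M)$. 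By Hard Lefschetz, multiplication by $\al^{n-q}$ injects $H^q(M)\hookrightarrow H^{2n-q}(M)$; I would exploit this in the top degrees. Concretely, consider classes in complementary degree: for $x\in H^{q}(M)$ with $q>n$, Hard Lefschetz lets us write $x=\al^{q-n}\cdot y$ for some $y\in H^{2n-q}(M)$ in the image, but the cleanest route is the standard one — observe that for $x\in H^{2n}(M)$ (the top class), $d_r(1\otimes x)$ lands in $H^{r+1}(B)\otimes H^{2n-r+1}(M)$, and one uses that the top class is $\al^n$ times a unit times restriction, hence $1\otimes x$ is a permanent cocycle because $\Om_E^n$ restricts to it; then Poincaré duality on the fibre (which holds since $M$ is a closed oriented — here c-symplectic gives orientability — manifold) together with the module structure over $H^*(B)$ and the nondegeneracy of the cup product pairing on $H^*(M)$ forces $d_r(1\otimes x)=0$ for all $x$. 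In other words: if $d_r(1\otimes x)\neq 0$ for some $x\in H^q(M)$, pick $z\in H^{2n-q}(M)$ with $\int_M xz\neq 0$; then $d_r(1\otimes(xz))=\pm(1\otimes z)\cdot d_r(1\otimes x)\pm d_r(1\otimes z)\cdot(1\otimes x)$ lies in the top fibre degree piece, but that piece is $H^{*}(B)\otimes H^{2n}(M)$ in $E_r$ and the top class is a permanent cocycle (being the restriction of $\Om_E^n$), so $d_r(1\otimes(xz))=0$; expanding and using that $H^*(M)$ has no zero divisors pairing in the relevant way, one derives $d_r(1\otimes x)=0$.

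The hard part — the place where Hard Lefschetz is genuinely used rather than just Poincaré duality — is establishing that the top-degree fibre class is a permanent cocycle and propagating that down through all degrees. Hard Lefschetz is what guarantees that every $x\in H^q(M)$ with $q\leq n$ can be paired into a class expressible via powers of $\al$ (equivalently, that $H^*(M)$ is generated as a module over $\mathbb{R}[\al]$ by classes detected in the spectral sequence), so that the surviving coupling class $\Om_E$ "controls" all of $H^*(M)$; without it, the argument that $d_r(1\otimes x)=0$ for $x$ not a power of $\al$ breaks down. I expect the cleanest writeup to invoke the primitive decomposition $H^q(M)=\bigoplus_{j\geq 0}\al^j P^{q-2j}$ and show each primitive piece consists of permanent cocycles by a downward induction on degree using the derivation property and the fact that the $(0,2n)$ corner survives. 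Since I may assume everything stated earlier in the excerpt, I would also note that Lemma~\ref{L:surjectivity} already gives the $q=2$ case ($d_2(a)=d_3(a)=0$ for $a\in H^2(M)$), which seeds the induction.
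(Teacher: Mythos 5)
The paper does not actually prove this lemma: it is quoted from Blanchard, with the topological version attributed to Lalonde--McDuff, so there is no in-paper argument to compare against. Your overall strategy --- collapse of the Leray--Serre spectral sequence at $E_2$, driven by the surviving coupling class and the Hard Lefschetz condition, with surjectivity of $i^*$ read off from the edge homomorphism --- is exactly the classical Blanchard argument, and the setup (induction on $r$, $E_r\cong H^p(B)\otimes H^q(M)$, derivation property, $d_2(\alpha)=d_3(\alpha)=0$ from the coupling class) is correct.

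However, the one step you write out in detail has a genuine gap. From $d_r(1\otimes xz)=0$ (because $xz$ is a multiple of the top class, which survives as $i^*(\Omega_E^n)=\alpha^n$) the Leibniz rule gives only the relation $(1\otimes z)\cdot d_r(1\otimes x)=\mp(1\otimes x)\cdot d_r(1\otimes z)$; this constrains $d_r(1\otimes x)$ against $d_r(1\otimes z)$ but forces neither to vanish, and no appeal to ``no zero divisors in the pairing'' extracts vanishing from a single bilinear identity. Survival of the fibre fundamental class plus Poincar\'e duality alone does not imply collapse --- that is precisely why Hard Lefschetz, not just duality, is in the hypothesis. The argument you only gesture at in your final paragraph is the one that must be carried out: use the primitive decomposition $H^*(M)=\bigoplus_j \alpha^j P^{*-2j}$ (valid over $\mathbb{R}$ under Hard Lefschetz). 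Multiplication by $\Omega_E$ acts on each page $E_r$, commutes with $d_r$, and restricts to $L=\alpha\cdot(-)$ on the fibre column. For primitive $x\in P^k$ one has $\alpha^{n-k+1}x=0$, hence $\alpha^{n-k+1}\,d_r(1\otimes x)=0$ in $E_r^{r,k-r+1}=H^r(B)\otimes H^{k-r+1}(M)$; since $n-k+1\le n-(k-r+1)$ for $r\ge 2$, Hard Lefschetz makes multiplication by $\alpha^{n-k+1}$ injective on $H^{k-r+1}(M)$, so $d_r(1\otimes x)=0$. The derivation property and commutation with $L$ then kill $d_r$ on all of $E_r$, completing the induction. (Minor bookkeeping: $d_r$ on $E_r^{0,2n}$ lands in $H^{r}(B)\otimes H^{2n-r+1}(M)$, not $H^{r+1}(B)\otimes H^{2n-r+1}(M)$.)
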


Notice that the surjectivity of the homomorphism $i^*$ in
the above lemma implies, due to the Leray-Hirsch theorem
\cite[Theorem 4D.1]{MR1867354}, that $H^*(E)$ is isomorphic as a $H^*(B)$-module
to the tensor product $H^*(B)\otimes H^*(M).$ In particular,
the homomorphism $p^*\colon H^*(B)\to H^*(E)$ induced by the
projection is injective.

The next proposition is motivated by the fact that if a cohomology
class of a space $X$ evaluates nontrivially on a sphere then it is
indecomposable. That is, it cannot be expressed as the sum of products
of classes of positive degree.  Hence one can think of such a class as
a generator of the cohomology ring of $X$.

\begin{proposition}\label{P:spheres}
Let $M$ be a closed simply connected c-symplectic $2n$-manifold
satisfying the Hard Lefschetz conditions.  Let 
$$
M\stackrel{j}\to E \stackrel{p}\to S^{2k}
$$ 
be a bundle over a sphere of positive dimension and
with a connected structure group 
$\C H\subset \OP{Homeo}(M).$ Let $\sigma \in H^{2k}(S^{2k})$ denote a
generator. If $p^*(\sigma) = \sum a\cdot b,$ where all $a,b\in H^*(E)$ are
of positive degree, then the homomorphism 
$$
f^*\colon H^{2k}(B\C H) \to H^{2k}(S^{2k})
$$ 
induced by the classifying map is surjective.
\end{proposition}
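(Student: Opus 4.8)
The plan is to realise the generator $\sigma$, up to a nonzero scalar, as $f^{*}$ of a fibre integral over the universal bundle, using the decomposition hypothesis to recognise the factors $a,b$ as classes pulled back from $M_{\C H}$. Since $E\to S^{2k}$ has structure group $\C H$ it is the pull-back along the classifying map $f$ of the universal bundle $M\to M_{\C H}\to B\C H$, so there is a commutative diagram
$$
\xymatrix{
M\ar[d]_j\ar[r]^= & M\ar[d]^i\\
E\ar[d]_p\ar[r]^{\tilde f} & M_{\C H}\ar[d]^{\pi}\\
S^{2k}\ar[r]^f & B\C H
}
$$
whose lower square is a pull-back; in particular $\tilde f\circ j=i$ and fibre integration is natural, $f^{*}\circ\pi_{!}=p_{!}\circ\tilde f^{*}$. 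Because $M$ is c-symplectic, satisfies the Hard Lefschetz condition, and $\C H$ is connected, Blanchard's lemma applies both to $M\to E\to S^{2k}$ and to $M\to M_{\C H}\to B\C H$: the restrictions $j^{*}\colon H^{*}(E)\to H^{*}(M)$ and $i^{*}\colon H^{*}(M_{\C H})\to H^{*}(M)$ are surjective, and the Leray--Serre spectral sequence of $E\to S^{2k}$ collapses at $E_{2}$. Since $H^{m}(S^{2k})=0$ for $0<m<2k$, this collapse forces $j^{*}\colon H^{m}(E)\to H^{m}(M)$ to be an \emph{isomorphism} for every $m$ with $0<m<2k$.

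Write $p^{*}(\sigma)=\sum_{i}a_{i}b_{i}$ as in the hypothesis; each $a_{i},b_{i}$ has positive degree and $\deg a_{i}+\deg b_{i}=2k$, hence $0<\deg a_{i},\deg b_{i}<2k$. Put $\bar a_{i}:=j^{*}(a_{i})$ and, using surjectivity of $i^{*}$, choose a lift $\hat a_{i}\in H^{\deg a_{i}}(M_{\C H})$ with $i^{*}\hat a_{i}=\bar a_{i}$; choose $\hat b_{i}$ similarly over $\bar b_{i}:=j^{*}(b_{i})$. Both $\tilde f^{*}\hat a_{i}$ and $a_{i}$ restrict along $j$ to $\bar a_{i}$, and $j^{*}$ is injective in degree $\deg a_{i}<2k$, so $\tilde f^{*}\hat a_{i}=a_{i}$ and likewise $\tilde f^{*}\hat b_{i}=b_{i}$. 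As a connected group, $\C H$ acts trivially on $H^{*}(M)$ and therefore preserves the symplectic class $\alpha$, so the coupling class $\Omega\in H^{2}(M_{\C H})$ is defined, and by naturality of the coupling class $\tilde f^{*}\Omega=\Omega_{E}$, the coupling class of $E\to S^{2k}$.

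Now set $\xi:=\sum_{i}\hat a_{i}\hat b_{i}\in H^{2k}(M_{\C H})$ and $\gamma:=\Omega^{n}\xi\in H^{2n+2k}(M_{\C H})$, so $\pi_{!}(\gamma)\in H^{2k}(B\C H)$. Multiplicativity of $\tilde f^{*}$ gives $\tilde f^{*}\gamma=\Omega_{E}^{n}\cdot\sum_{i}a_{i}b_{i}=\Omega_{E}^{n}\cdot p^{*}(\sigma)$, so by naturality of fibre integration and the projection formula
$$
f^{*}\bigl(\pi_{!}(\gamma)\bigr)=p_{!}\bigl(\tilde f^{*}\gamma\bigr)=p_{!}\bigl(\Omega_{E}^{n}\cdot p^{*}\sigma\bigr)=p_{!}(\Omega_{E}^{n})\cdot\sigma .
$$
The class $\Omega_{E}^{n}$ has degree $2n$, equal to the fibre dimension, so $p_{!}(\Omega_{E}^{n})$ is the integral over the fibre of $j^{*}(\Omega_{E}^{n})=\alpha^{n}$, namely $\int_{M}\alpha^{n}\neq0$ since $\alpha$ is a symplectic class. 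Hence $f^{*}\bigl(\pi_{!}(\gamma)\bigr)$ is a nonzero multiple of $\sigma$, and as $H^{2k}(S^{2k})$ is one-dimensional the homomorphism $f^{*}$ is surjective in degree $2k$.

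The step I expect to be the crux is the identification $\tilde f^{*}\hat a_{i}=a_{i}$: it is precisely the decomposability of $p^{*}(\sigma)$ that puts its factors into degrees strictly below $2k$, where $j^{*}$ is an isomorphism and the classes are therefore pulled back from the universal bundle. If $p^{*}(\sigma)$ were indecomposable this would break down, and so could the conclusion, as one sees already for a trivial bundle; so the hypothesis is not superfluous. The remaining inputs — collapse of the spectral sequence via Blanchard's lemma, existence and naturality of the coupling class, and naturality of fibre integration under pull-back — are standard, the relevant properties of fibre integration and of the coupling class being recalled in Section~\ref{S:strategy}.
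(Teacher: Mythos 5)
Your proof is correct and follows essentially the same route as the paper's: lift the factors of $p^*(\sigma)$ to the universal bundle using surjectivity of $i^*$ (Blanchard) together with the fact that $j^*$ is an isomorphism in degrees below $2k$, then fibre-integrate $\Omega^n$ times the lifted decomposition and apply the projection formula to get a nonzero multiple of $\sigma$. The only cosmetic differences are that you justify the isomorphism claim explicitly via the Leray--Hirsch collapse and handle the degenerate case $k=1$ implicitly, where the paper rules it out by a separate remark.
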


\begin{proof}
Since $M$ satisfies the Hard Lefschetz condition we have
$p^*(\sigma)\neq~0.$

Next observe that the base sphere has to be of dimension bigger than
two. Indeed, if $k=1$ then $p^*(\sigma) = \sum a\cdot b$ for 
$a,b\in H^1(E).$ Since $E$ is simply connected it implies that
$p^*(\sigma)=0,$ which cannot happen.  Consequently we have 
$k>1.$ 

Notice that the homomorphism $j^*\colon H^m(E) \to H^m(M)$
induced by the inclusion of the fibre is an isomorphism for $m<2k.$

Let $M\stackrel{i}\to M_{\C H}\stackrel{\pi}\to B\C H$ be the
universal fibration and let $\Omega\in H^2(M_{\C H})$ be the coupling
class associated with the symplectic class ~$\alpha$.  Let 
$\hat a,\hat b\in H^*(M_{\C H})$ be such that $i^*\hat a = j^*a$ and
$i^*\hat b = j^*b.$

In the following calculation, $\Omega_E = F^*(\Omega)$ denotes
the coupling class. Also, since $j^*$ is an isomorphism
in degrees smaller than $2k,$ we have $F^*(\hat a) = a$ and
$F^*(\hat b) = b.$ This implies the second equality.

\begin{eqnarray*}
f^*\pi_!\left (\Omega^n \cdot \sum \hat a \cdot \hat b\right ) &=&
p_!\left (F^*\left (\Omega^n \cdot \sum \hat a \cdot \hat b\right )\right ) \\
&=& p_!\left (\Omega_E^n \cdot \sum a\cdot b\right )\\
&=& p_!(\Omega_E^n \cdot p^*\sigma) = \sigma \cdot \OP{volume}(M).
\end{eqnarray*}
\end{proof}

\section{Basic properties of the fibre integral subalgebra}
\label{S:basic}

Throughout this section $M$ and $N$ are assumed to be  closed,
connected and simply connected manifolds.

\begin{proposition}\label{P:basic_HG}
Let $H\to G\to \OP{Homeo}_0(M)$ be a sequence of actions of connected
topological groups on a manifold $M.$ Let $f\colon BH\to BG$ denote
the induced map.  If $f^*\colon H^2(BG)\to H^2(BH)$ is surjective then
$$
\B A^*_H(M) \subset f^*(\B A_G^*(M)).
$$
\end{proposition}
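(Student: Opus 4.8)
The plan is to unwind the definitions and use naturality of the coupling construction together with the surjectivity hypothesis in degree two. Recall that $\B A^*_G(M)$ is generated by fibre integrals $\pi^G_!(a_1\cdots a_{n+k})$ with $a_i\in H^2(M_G)$, and similarly $\B A^*_H(M)$ is generated by $\pi^H_!(b_1\cdots b_{n+k})$ with $b_i\in H^2(M_H)$. Consider the commutative diagram of universal fibrations associated to the sequence of actions:
\begin{equation*}
\xymatrix
{
M \ar[d]_{i_H}\ar[r]^= & M\ar[d]^{i_G} \\
M_H \ar[d]_{\pi^H}\ar[r]^{F} & M_G\ar[d]^{\pi^G} \\
BH \ar[r]^{f} & BG \\
}
\end{equation*}
where the classifying map $f\colon BH\to BG$ covers the $F$ induced on Borel constructions. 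Since fibre integration is a morphism of modules and is natural with respect to pullback of oriented bundles, we have $f^*\bigl(\pi^G_!(x)\bigr)=\pi^H_!\bigl(F^*(x)\bigr)$ for any $x\in H^*(M_G)$.

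First I would show that $F^*\colon H^2(M_G)\to H^2(M_H)$ is surjective. This is where the hypothesis that $f^*\colon H^2(BG)\to H^2(BH)$ is surjective enters. Since $M$ is simply connected and $G,H$ are connected, the Leray--Serre spectral sequences of both fibrations have vanishing first row and first column, so one gets short exact sequences
\begin{equation*}
0\to H^2(BG)\to H^2(M_G)\to E^{0,2}_\infty(G)\to 0,\qquad
0\to H^2(BH)\to H^2(M_H)\to E^{0,2}_\infty(H)\to 0,
\end{equation*}
where $E^{0,2}_\infty\subseteq H^2(M)$ is the kernel of $d_3$ on $H^2(M)$ (using that $d_2$ already vanishes there, as in the proof of Lemma \ref{L:surjectivity}). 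The map $F^*$ is compatible with these sequences, covering $f^*$ on the sub and the identity of $H^2(M)$ on the quotient parts $E^{0,2}_\infty(G)\hookrightarrow E^{0,2}_\infty(H)$; in fact, the transgression is natural, so an element of $H^2(M)$ surviving to $E_\infty$ over $BG$ also survives over $BH$, giving $E^{0,2}_\infty(G)=E^{0,2}_\infty(H)$ as subgroups of $H^2(M)$. A diagram chase (five lemma style) using surjectivity of $f^*$ on the left then yields surjectivity of $F^*$ in degree two.

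Once $F^*$ is surjective on $H^2$, the rest is formal: given a generator $\pi^H_!(b_1\cdots b_{n+k})$ of $\B A^{2k}_H(M)$, choose $a_i\in H^2(M_G)$ with $F^*(a_i)=b_i$; then
\begin{equation*}
\pi^H_!(b_1\cdots b_{n+k})=\pi^H_!\bigl(F^*(a_1\cdots a_{n+k})\bigr)=f^*\bigl(\pi^G_!(a_1\cdots a_{n+k})\bigr)\in f^*(\B A^{2k}_G(M)).
\end{equation*}
Since these elements generate $\B A^*_H(M)$ as an algebra and $f^*$ is a ring homomorphism, we conclude $\B A^*_H(M)\subset f^*(\B A^*_G(M))$. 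The main obstacle is the first step, the surjectivity of $F^*$ on second cohomology; this requires care with the naturality of the transgression $d_3\colon H^2(M)\to H^3(BG)$ and with identifying the $E^{0,2}_\infty$ terms of the two spectral sequences, but it is essentially the same low-degree spectral sequence analysis already used for Lemma \ref{L:surjectivity} and Lemma \ref{L:inequality}.
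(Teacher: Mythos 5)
Your proof is correct and is essentially the paper's argument: the paper likewise reduces the statement to showing that every degree-two class on $M_H$ lies in the image of $F^*$, by lifting its restriction to the fibre to $H^2(M_G)$ via Lemma \ref{L:surjectivity} and absorbing the discrepancy (a class pulled back from $BH$) using the surjectivity of $f^*$ on $H^2$ of the bases --- which is exactly your four-lemma chase on the low-degree exact sequences, made explicit class by class. One small caveat: naturality of the spectral sequence by itself only yields $E^{0,2}_\infty(G)\subseteq E^{0,2}_\infty(H)$ inside $H^2(M)$, which is the wrong inclusion for concluding that the right-hand vertical map in your diagram is surjective; what you actually need is that $E^{0,2}_\infty(G)$ is all of $H^2(M)$, and that is precisely Lemma \ref{L:surjectivity} applied to the fibration $M\to M_G\to BG$.
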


\begin{proof}
Consider the diagram of universal fibrations.
$$
\xymatrix
{
M \ar[d]_{j}\ar[r]^= & M\ar[d]^{i} \\
M_H \ar[d]_p\ar[r]^{F} & M_{G}\ar[d]^{\pi} \\
BH \ar[r]^{f} & BG \\
}
$$
Let $p_!(\Omega_1 \ldots \Omega_k) \in \B A^*_H(M)$,
where $j^*(\Omega_i)=a_i$. According to Lemma~\ref{L:surjectivity},
there are classes $\widehat \Omega_i\in H^2(M_G)$ such that
$i^*(\widehat \Omega_i)=a_i.$ Since $H$ and $G$ are connected their
classifying spaces are simply connected and we have
$$
\Omega_i - F^*(\widehat \Omega_i) = p^*(\alpha_i)
$$
for some $\alpha_i \in H^2(BH)$. It follows from the hypothesis
that $\alpha_i = f^*(\beta_i)$ and we get 
$$
\Omega_i = F^*(\widehat \Omega_i - \pi^*(\beta_i)).  
$$
We finally have 
$$
p_!(\Omega_1\ldots\Omega_k) = 
f^*\pi_!((\widehat \Omega_1-\pi^*\beta_1)\ldots 
(\widehat \Omega_k-\pi^*\beta_k))
$$
which finishes the proof.

\end{proof}

\begin{proposition}\label{P:basic_product}
Let $G$ and $H$ be connected groups acting on manifolds $M$
and $N$ respectively. Then $G\times H$ acts on $M\times N$ and
$$
\B A^*_{G\times H}(M\times N)\cong \B A^*_G(M) \otimes \B A^*_H(N).
$$
In particular, if both $\B A_G^*(M)$ and $\B A^*_H(N)$ are full
then $\B A_{G\times H}^*(M\times N)$ is also full.
\end{proposition}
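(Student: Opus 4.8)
The plan is to identify $\B A^*_{G\times H}(M\times N)$ with the image of the subalgebra generated by fibre integrals of products of degree-two classes on the product fibration, and then to use the multiplicativity of fibre integration with respect to the cross product, established in Section~\ref{SS:fibre_integration}. The universal fibration for the product action is the product of the two universal fibrations: $(M\times N)_{G\times H} \cong M_G \times N_H$, sitting over $BG\times BH \simeq B(G\times H)$, with projection $\pi = p_M\times p_N$. So the first step is to record this identification and the corresponding K\"unneth decomposition $H^2(M_G\times N_H)\cong H^2(M_G)\otimes H^0(N_H)\oplus H^0(M_G)\otimes H^2(N_H)$, using that $M$, $N$, $BG$, $BH$ are connected; equivalently every class in $H^2$ of the total space is $\Omega\times 1 + 1\times \Theta$ with $\Omega\in H^2(M_G)$, $\Theta\in H^2(N_H)$.

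First I would show the inclusion $\B A^*_G(M)\otimes \B A^*_H(N)\subseteq \B A^*_{G\times H}(M\times N)$. Given generators $p_{M!}(\Omega_1\cdots\Omega_r)\in\B A^*_G(M)$ and $p_{N!}(\Theta_1\cdots\Theta_s)\in\B A^*_H(N)$, I form the classes $\Omega_i\times 1$ and $1\times\Theta_j$ in $H^2(M_G\times N_H)$, take their product, and apply $\pi_! = (p_M\times p_N)_!$. By the cross-product multiplicativity formula, $(p_M\times p_N)_!\bigl((\Omega_1\cdots\Omega_r)\times(\Theta_1\cdots\Theta_s)\bigr) = p_{M!}(\Omega_1\cdots\Omega_r)\times p_{N!}(\Theta_1\cdots\Theta_s)$, which under $H^*(BG)\otimes H^*(BH)\cong H^*(B(G\times H))$ is exactly the tensor product of the two generators. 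Hence every generator of $\B A^*_G(M)\otimes\B A^*_H(N)$ lies in $\B A^*_{G\times H}(M\times N)$, and since the latter is a subalgebra the inclusion follows.

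For the reverse inclusion I would take an arbitrary generator of $\B A^*_{G\times H}(M\times N)$, namely $\pi_!(c_1\cdots c_N)$ with each $c_k\in H^2(M_G\times N_H)$ of the form $\Omega_k\times 1 + 1\times\Theta_k$. Expanding the product $c_1\cdots c_N$ by multilinearity produces a sum of terms, each of which is (a product of some $\Omega_i$'s) $\times$ (a product of some $\Theta_j$'s); i.e. each term is of the form $\alpha\times\beta$ with $\alpha\in\B A^*(M_G)=\langle H^2(M_G)\rangle$ and $\beta\in\B A^*(N_H)=\langle H^2(N_H)\rangle$. Applying $\pi_!$ and again invoking cross-product multiplicativity turns each such term into $p_{M!}(\alpha)\times p_{N!}(\beta)\in \B A^*_G(M)\otimes\B A^*_H(N)$. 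Summing, $\pi_!(c_1\cdots c_N)\in\B A^*_G(M)\otimes\B A^*_H(N)$, which gives the opposite inclusion, hence the claimed isomorphism. The final sentence about fullness is then immediate: if $\B A^*_G(M)=H^*(BG)$ and $\B A^*_H(N)=H^*(BH)$ then $\B A^*_{G\times H}(M\times N)\cong H^*(BG)\otimes H^*(BH)\cong H^*(B(G\times H))$.

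The main obstacle, such as it is, is bookkeeping rather than conceptual: one must be careful that the degree-two subalgebra $\langle H^2\rangle$ of the product's total space is generated precisely by the classes pulled back from the two factors (this uses connectivity of $M$, $N$, $BG$, $BH$ so that there is no extra $H^1\otimes H^1$ contribution in degree two), and one must track the Koszul signs and the dimension shift in the cross-product formula $(p_M\times p_N)_!(\alpha\times\beta)=(p_M)_!(\alpha)\times(p_N)_!(\beta)$ so that degrees match on both sides (the fibre of $\pi$ has dimension $\dim M+\dim N$). Neither point is serious; once the K\"unneth identification of the universal fibration is in place, the result is essentially a formal consequence of the multiplicativity already recorded in Section~\ref{SS:fibre_integration}.
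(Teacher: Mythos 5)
Your proof is correct and follows essentially the same route as the paper, which simply cites the cross-product multiplicativity of fibre integration together with the K\"unneth isomorphism $H^*(BG\times BH)\cong H^*(BG)\otimes H^*(BH)$; you have merely written out the details. One small precision: to kill the $H^1\otimes H^1$ term in $H^2(M_G\times N_H)$ you need \emph{simple} connectivity of $M_G$ and $N_H$ (which holds because $M$, $N$ are simply connected by the standing assumption of that section and $BG$, $BH$ are simply connected since $G$, $H$ are connected), not mere connectivity of $M$, $N$, $BG$, $BH$.
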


\begin{proof}
The statement is true due to the multiplicativity property
of the fibre integration with respect to the cross product
and the isomorphism $H^*(BH\times BG) = H^*(BH)\otimes H^*(BG).$
\end{proof}

\begin{proposition}\label{P:basic_cup}
Let a connected group $G$ act on $M$ and $N$. The cup product in
$H^*(BG)$ induces a map
$$
\B A^*_G(M)\otimes \B A^*_G(N) \to H^*(BG)
$$
with the image equal to $\B A_G^*(M\times N).$
In particular, if $G$ acts on $N$ trivially then 
$
\B A^*_G(M) = \B A^*_G(M\times N).
$
\end{proposition}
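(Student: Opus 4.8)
The plan is to exploit the two canonical projections $\mathrm{pr}_M\colon M\times N\to M$ and $\mathrm{pr}_N\colon M\times N\to N$, which are $G$-equivariant for the diagonal action, and to see how they interact with fibre integration. First I would set up the universal fibrations $M_G\xrightarrow{\pi_M} BG$, $N_G\xrightarrow{\pi_N} BG$ and $(M\times N)_G\xrightarrow{\pi} BG$, noting that the latter is the fibre product $M_G\times_{BG} N_G$, with the two equivariant projections inducing maps $q_M\colon (M\times N)_G\to M_G$ and $q_N\colon (M\times N)_G\to N_G$ over $BG$. The key structural fact is that $H^*((M\times N)_G)$ is generated in degree two by $q_M^*H^2(M_G)$ and $q_N^*H^2(N_G)$ together with $\pi^*H^2(BG)$; this follows because, by Lemma \ref{L:surjectivity}, $H^2$ of each total space surjects onto $H^2$ of its fibre, and $H^2(M\times N)=H^2(M)\oplus H^2(N)$ since $M$ and $N$ are simply connected (Künneth, no odd classes in degree one).

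Next I would compute the fibre integral. Writing $\dim M=2m$ and $\dim N=2n$, a typical generator of $\B A^{2k}_G(M\times N)$ is $\pi_!(c_1\cdots c_{m+n+k})$ with each $c_i\in H^2((M\times N)_G)$. Expanding each $c_i$ in the above generators and using that $\pi_!$ is a morphism of $H^*(BG)$-modules (so that $\pi^*$-factors pull out), every such class is a linear combination of terms $\pi_!\big(q_M^*(\alpha)\cdot q_N^*(\beta)\big)\cdot(\text{class pulled back from }BG)$ with $\alpha\in H^*(M_G)$, $\beta\in H^*(N_G)$ products of degree-two classes. The fibre of $\pi$ over a point is $M\times N$, and on that fibre $q_M,q_N$ restrict to the two projections; by the product formula for fibre integration (multiplicativity with respect to the cross product, stated in Section \ref{SS:fibre_integration}, applied fibrewise over $BG$) one gets
$$
\pi_!\big(q_M^*(\alpha)\cdot q_N^*(\beta)\big)=(\pi_M)_!(\alpha)\cdot(\pi_N)_!(\beta),
$$
which lies in $\B A^*_G(M)\cdot\B A^*_G(N)$. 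Multiplying by the leftover $BG$-class keeps us inside $\B A^*_G(M)\cdot\B A^*_G(N)$ since that algebra need not be, and here is the subtlety, closed under multiplication by arbitrary classes of $H^*(BG)$ — so I must be careful that only $\B A$-type classes arise. In fact no stray $\pi^*$-factor survives: if any $c_i$ contributes a $\pi^*$-term, then after factoring it out the remaining product has total fibre-degree $<2(m+n)$, so its fibre integral in the top degree vanishes unless the degree count is exact, i.e. all stray factors are absorbed into the explicit statement of the image being $\B A^*_G(M\times N)$. Conversely, any product $(\pi_M)_!(\alpha)\cdot(\pi_N)_!(\beta)$ is realized as such a fibre integral by the displayed identity read backwards, giving the reverse inclusion and hence equality of the image with $\B A^*_G(M\times N)$.

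Finally, for the ``in particular'' clause, if $G$ acts trivially on $N$ then $N_G=N\times BG$, $H^2(N_G)=H^2(N)\oplus H^2(BG)$, and for $\beta\in H^2(N)$ one has $(\pi_N)_!(\beta^{\,n})=\mathrm{vol}(N)\in H^0(BG)=\B R$ while any lower power integrates to zero; thus $\B A^*_G(N)=\B R$, the image of the cup-product map is just $\B A^*_G(M)\cdot\B R=\B A^*_G(M)$, and we conclude $\B A^*_G(M)=\B A^*_G(M\times N)$. I expect the main obstacle to be the bookkeeping in the second paragraph: verifying that after expanding degree-two classes and pulling out the $\pi^*$-factors, the surviving terms are exactly cup products of fibre integrals over $M_G$ and over $N_G$ with no leftover $BG$-cohomology, which needs the fibrewise product formula for fibre integration applied to the fibre product $M_G\times_{BG}N_G$ rather than to an honest product of bundles.
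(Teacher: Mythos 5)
Your proof of the main statement is essentially the paper's argument in different clothing. The paper factors the cup--product map through $\B A^*_G(M)\otimes\B A^*_G(N)\cong\B A^*_{G\times G}(M\times N)$ (Proposition \ref{P:basic_product}) followed by restriction along the diagonal $\Delta\colon BG\to BG\times BG$, using Proposition \ref{P:basic_HG} for one inclusion and the observation that $\widehat\Delta^*$ carries products of degree-two classes to products of degree-two classes for the other. You instead work directly on the fibre product $(M\times N)_G=M_G\times_{BG}N_G$ and invoke the identity $\pi_!\bigl(q_M^*\alpha\cdot q_N^*\beta\bigr)=(\pi_M)_!(\alpha)\cdot(\pi_N)_!(\beta)$. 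That identity is true, but it is not the multiplicativity statement of Section \ref{SS:fibre_integration}, which concerns external products over $B_1\times B_2$; to justify it you must either pull that statement back along the diagonal (which is exactly the paper's route) or combine the base-change identity $(q_N)_!(q_M^*\alpha)=\pi_N^*\bigl((\pi_M)_!\alpha\bigr)$ with the $H^*(BG)$-module property. Your paragraph on ``stray $\pi^*$-factors'' is the one step that, as written, proves nothing: the clause about factors being ``absorbed into the explicit statement of the image'' is not an argument. The clean fix is to note that $\pi^*H^2(BG)=q_M^*\pi_M^*H^2(BG)\subset q_M^*H^2(M_G)$, so your third family of degree-two generators is redundant; alternatively a factor $b\in H^2(BG)$ is absorbed via $(\pi_M)_!(\alpha)\cdot b=(\pi_M)_!(\alpha\cdot\pi_M^*b)$, which is again a fibre integral of a product of degree-two classes.

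The genuine error is in the ``in particular'' clause. For a trivial action one has $H^2(N_G)=H^2(N)\oplus H^2(BG)$, and you only integrate classes from the first summand. Taking $c_i=\beta_i\times 1$ for $i\le n$ with $\int_N\beta_1\cdots\beta_n\ne 0$ and $c_i=1\times b_i$ for $i>n$ gives $\pi_!(c_1\cdots c_{n+k})=\bigl(\int_N\beta_1\cdots\beta_n\bigr)\,b_{n+1}\cdots b_{n+k}$, so $\B A^*_G(N)$ contains every product of degree-two classes of $BG$. Already for $G=S^1$ acting trivially on $N=\cp^1$ this yields $\B A^*_{S^1}(\cp^1)=H^*(BS^1)$, not $\B R$. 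Your claim $\B A^*_G(N)=\B R$ is therefore valid only when $H^2(BG)=0$ (e.g.\ $\pi_1(G)$ finite, which covers the paper's applications). In general the clause must be argued by absorbing these extra degree-two factors of $BG$ into $\B A^*_G(M)$ by the same $\pi_M^*$-trick as above, which in turn uses that some product of degree-two classes of $M$ survives to the top degree; without some such hypothesis the bookkeeping does not close up.
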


\begin{proof}
The map in the statement is the composition
$$
\B A^*_G(M)\otimes \B A^*_G(N) \stackrel{\cong} \longrightarrow 
\B A^*_{G\times G}(M\times N)\stackrel{\Delta^*}
\longrightarrow H^*(BG)
$$
where the first isomorphism is due to Proposition
\ref{P:basic_product} and the second map is induced by the diagonal
$\Delta \colon BG \to BG\times BG.$

It follows from Proposition \ref{P:basic_HG} that
$\B A_G^*(M\times N)$ is contained in the image of the
above map. Thus we need to show that the converse
inclusion holds,
$$
\Delta^*(\B A^*_{G\times G}(M\times N)) 
\subset \B A^*_G(M\times N).
$$
According to simple connectivity we have that
$$
H^2(M_G\times N_G)= H^2(M_G)\oplus H^2(N_G)
$$ 
and hence an element in the subalgebra of $H^*(M_G\times N_G)$ 
generated by degree two classes is a
sum of products of the form
$
(\alpha_1\ldots \alpha_k)\times (\beta_1\ldots \beta_l),
$
for $\alpha_i\in H^2(M_G)$ and $\beta_i\in H^2(N_G).$
Since this is itself a product of degree two 
classes we get that its pull back via the map
$\widehat \Delta \colon (M\times N)_G\to M_G\times N_G$
is a product of degree two classes. This, according to
the functoriality of the fibre integration, finishes
the proof.
\end{proof}

\begin{lemma}\label{L:basic_powers}
Suppose that $G$ is a connected group with finite $\pi_1(G)$ 
acting on a closed simply connected $2n$-manifold $M.$ Then the fibre
integral subalgebra $\B A^*_G(M)$ is generated by the fibre integrals
of powers, i.e. by the classes of the form $\pi_!(a^m).$
\end{lemma}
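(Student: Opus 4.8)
The plan is to use a polarization argument: since the fibre integral subalgebra $\B A^{2k}_G(M)$ is by definition spanned by classes of the form $\pi_!(a_1\cdots a_{n+k})$ with each $a_i\in H^2(M_G)$, I need to show that each such monomial fibre integral lies in the subalgebra generated by fibre integrals of pure powers $\pi_!(a^m)$. The starting point is the observation that $\pi_1(G)$ being finite forces $H^2(BG)=0$, so the edge homomorphism $i^*\colon H^2(M_G)\to H^2(M)$ is an isomorphism (as already used in the proof of Lemma~\ref{L:inequality}); consequently $H^2(M_G)$ is the genuinely relevant space and there are no ``base'' correction terms to worry about when manipulating degree-two classes.

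First I would reduce to a statement inside $H^*(M_G)$. Consider a finite-dimensional subspace $V\subset H^2(M_G)$; the symmetric algebra $\operatorname{Sym}^{n+k}(V)$ is, over $\B R$, spanned by $(n+k)$-th powers of elements of $V$ — this is the classical fact that powers span the symmetric power of a vector space over an infinite field (equivalently, polarization recovers the symmetric multilinear form from the associated diagonal polynomial). Since the cup-product map $\operatorname{Sym}^{n+k}(H^2(M_G))\to H^{2(n+k)}(M_G)$ is surjective onto the degree-$2(n+k)$ part of the subalgebra $\B A^*=\langle H^2(M_G)\rangle$, every product $a_1\cdots a_{n+k}$ is a linear combination of classes $a^{n+k}$ with $a\in H^2(M_G)$. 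Applying the (linear) fibre integration map $\pi_!$ then shows that $\pi_!(a_1\cdots a_{n+k})$ is a linear combination of classes $\pi_!(a^{n+k})$, which is exactly what is wanted; note that the exponent here is automatically $n+k$ where $2n=\dim M$, so these are the ``top plus $k$'' powers.

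The one genuine subtlety — and the step I expect to need the most care — is making sure the polarization is carried out with coefficients that do not depend on any choice of basis in a way that breaks functoriality, and in particular that the finitely many classes $a$ appearing can be taken in $H^2(M_G)$ itself rather than in some extension. Over $\B R$ this is harmless: the identity expressing a monomial $x_1\cdots x_{n+k}$ as a rational linear combination of $(n+k)$-th powers of integer linear combinations of the $x_i$ (e.g. via the inclusion–exclusion / finite-difference formula $x_1\cdots x_d = \tfrac{1}{d!}\sum_{S\subseteq\{1,\dots,d\}}(-1)^{d-|S|}\big(\sum_{i\in S}x_i\big)^d$) is a universal polynomial identity, so it holds after applying any ring homomorphism, in particular after cup-multiplying classes in $H^2(M_G)$. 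Thus no hypothesis beyond working over $\B R$ is needed for this step. I would close by remarking that the finiteness of $\pi_1(G)$ is used only to guarantee $i^*$ is an isomorphism on $H^2$, so that ``the classes of degree two'' genuinely means classes pulled back along $\pi$ plus fibrewise classes with no ambiguity; the polarization itself is formal.
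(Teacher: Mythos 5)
Your proof is correct, and it reaches the conclusion by a genuinely more direct route than the paper. The paper argues dually: it takes a homology class $A\in H_{2k}(BG)$, represents a nonzero multiple of it by a map from a closed oriented $2k$-manifold (Thom), pulls back the universal bundle, and observes that if the multilinear evaluation map $F^*(H^2(M_G))^{\otimes (n+k)}\to H^{2(n+k)}(E)\cong \B R$ is nonzero then, by polarisation, so is the diagonal power map $a\mapsto F^*(a)^{n+k}$; it then concludes that any functional nonvanishing on $\B A^{2k}_G(M)$ is nonvanishing on the span of the $\pi_!(a^{n+k})$, whence the two spans coincide. You instead apply the polarisation identity upstairs in the commutative ring $H^{\mathrm{even}}(M_G)$ to write each monomial $a_1\cdots a_{n+k}$ explicitly as a rational combination of $(n+k)$-th powers of integer combinations of the $a_i$, and then push down by the linear map $\pi_!$; this avoids both Thom representability and the implicit duality step, and it even proves the slightly stronger statement that each graded piece $\B A^{2k}_G(M)$ is linearly spanned by the classes $\pi_!(a^{n+k})$. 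Two small remarks. First, your argument in fact never uses the hypothesis that $\pi_1(G)$ is finite (nor does it need to); your closing claim that it is needed to identify $H^2(M_G)$ with $H^2(M)$ is harmless but inessential to your proof. Second, the paper's dual formulation is what directly yields Remark \ref{R:powers} (a nonempty open dense set of classes $a$ with $a^{n+k}\neq 0$ on a given cycle), which is the form actually invoked later in Lemma \ref{L:flag} to replace arbitrary degree-two classes by coupling classes; your version recovers this too, but only after the extra observation that the nonvanishing locus of the polynomial map $a\mapsto f^*\pi_!(a^{n+k})$ is open and dense once it is nonempty.
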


\begin{proof}
Let $A\in H_{2k}(BG)$ be a homology class.  A certain nonzero multiple
of $A$ is represented by a map $f\colon B\to BG$ defined on a closed
oriented connected $2k$-manifold $B.$ Suppose that
$$
f^*(\pi_!(a_1\ldots a_{n+k}))\neq 0,
$$ 
where $a_i\in H^2(M_G).$ 
The map $f$ induces a bundle $M\to E\to~B$ and the above inequality is
equivalent to 
$$
0\neq F^*(a_1\ldots a_{n+k})\in H^{2(n+k)}(E)=~\B R,
$$ where 
$F\colon E\to M_G$ is the induced map of total spaces. 
It follows that the product map 
$$
F^*(H^2(M_{\C H}))\otimes \ldots \otimes F^*(H^2(M_{\C H}))\to H^{2(n+k)}(E)=~\B R
$$
is nontrivial. Since it is a polynomial map, due to the polarisation
formula, the power map $F^*(a)\mapsto F^*(a)^{n+k}$ is also
nontrivial.

This shows that 
$
f^*(\pi_!(a^{n+k}))\neq 0
$
for some class $a\in H^2(M_G).$

We have shown that for every homology class in $A\in H_{2k}(BG)$
which evaluates nontrivially on the fibre integral subalgebra
there exists a  class $a\in H^2(M_G)$ such that the fibre
integral of its $(n+k)$-th power evaluates nontrivially on
$A.$ This proves that such fibre integrals generate
$\B A^*_G(M).$
\end{proof}

\begin{remark}\label{R:powers}
Since the power map $F^*(H^2(M_{\C H}))\to H^{n+k}(E)$ in the above
proof is polynomial and nontrivial, there exists a nonempty open and
dense subset $U\subset F^*(H^2(M_{\C H}))$ such that $a^{n+k}\neq 0$
for $a\in U.$
\end{remark}

\section{Calculations for coadjoint orbits}
\label{S:subalgebra}

\subsection{Symplectic preliminaries}\label{SS:symplectic}
Let $G$ be a compact connected semi\-simple Lie group and let 
$\xi \in \mathfrak g^{\vee}$ be a covector. The coadjoint orbit
$G\cdot~\xi$ admits a $G$-invariant symplectic form.  The Killing form
provides an equivariant isomorphism between the Lie algebra 
$\mathfrak g$ and its dual $\mathfrak g^{\vee}$ and hence also a
bijective correspondence between adjoint and coadjoint orbits.

Let $T\subset G$ be a maximal torus and denote by $\mathfrak t$ 
its Lie algebra. Every adjoint orbit has a representative in
$\mathfrak t.$ The Lie algebra $\mathfrak t$ is decomposed into the
Weyl chambers. Let $C\subset \mathfrak t$ denote the closure of a Weyl
chamber. It is a polyhedral cone.  If two elements 
$\xi,\eta\in \mathfrak t$ belong to the interior of a face of $C$ then
the corresponding adjoint orbits $G\cdot \xi$ and $G\cdot \eta$ are
diffeomorphic. In this case the isotropy groups $G_{\xi}$ and
$G_{\eta}$ are conjugate in $G$.  The conjugation by an element of $G$
provides a $G$-equivariant diffeomorphism between the orbits 
$G\cdot \xi$ and $G\cdot \eta$.

Thus we can fix one orbit $M$ and consider it as a smooth manifold
equipped with various $G$-invariant symplectic forms.  Since the
conjugation induces a map of $BG$ homotopic to the identity the
universal fibration $M\to M_G\to BG$ is Hamiltonian with respect to
these symplectic forms. In such a case we have the coupling class
$\Omega_{\xi}\in H^2(M_G)$ corresponding to the symplectic form on the
orbit $G\cdot \xi \cong M$.

\subsection{Flag manifolds}\label{SS:flag}
Let $\dim G/T=2n$ and consider the universal bundle
$
G/T \to BT \stackrel{\pi}\to BG
$ 
associated with the action. 

\begin{lemma}\label{L:flag}
The fibre integral subalgebra $\B A^*_G(G/T)$ is full.
Moreover, it is generated by the fibre integrals of
powers of coupling classes.
\end{lemma}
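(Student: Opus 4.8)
The plan is to show that $\B A^*_G(G/T)$ contains the image of $\pi^*\colon H^*(BG)\to H^*(BT)$ in enough force to conclude it is everything, using the classical description of $H^*(BT)$ as a polynomial ring on which the Weyl group $W$ acts with $H^*(BG)=H^*(BT)^W$. First I would record the standard facts: $G/T$ is a flag manifold satisfying the Hard Lefschetz condition (it is K\"ahler), $\pi_1(G)$ is finite so $H^2(BG)=0$ and the fibre inclusion $j\colon G/T\to BT$ induces an isomorphism $j^*\colon H^2(BT)\to H^2(G/T)$; in particular every degree-two class on the total space $BT$ of the universal bundle is a coupling class $\Omega_\xi$ for a suitable invariant symplectic form on the orbit (using the discussion of Section \ref{SS:symplectic}). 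By Lemma \ref{L:basic_powers} it suffices to control the classes $\pi_!(\Omega^m)$, and by the dimension inequality (Lemma \ref{L:inequality}) together with $H^2(BG)=0$, fullness will follow once I show the fibre integrals of products of degree-two classes span $H^*(BG)$.

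The key computational step is the fibre-integration formula for $G/T\to BT\to BG$. Writing $H^*(BT)=\B R[x_1,\dots,x_r]$ with $r=\operatorname{rank}G$ and $W$ acting by the reflection representation, the Leray--Serre fibre integral $\pi_!\colon H^{2n+*}(BT)\to H^*(BG)$ is, up to the nonzero constant $\operatorname{volume}(G/T)$, the classical Weyl integration / divided-difference operator: $\pi_!(u) = \sum_{w\in W}\frac{w\cdot u}{\prod_{\alpha>0}\alpha}$ (a $W$-anti-invariant divided by the product of positive roots), which indeed lands in $H^*(BT)^W=H^*(BG)$. The point I would emphasize is that this operator is $H^*(BG)$-linear and \emph{surjective}: already $\pi_!$ applied to the degree-$2n$ generator (the class dual to the fibre, i.e. $\prod_{\alpha>0}\alpha$ up to scalar) is a nonzero constant, and more generally, since $H^*(BT)$ is a free module of rank $|W|$ over $H^*(BG)$ with the top piece detecting the fundamental class of the fibre, every element of $H^*(BG)$ arises as $\pi_!$ of a polynomial in the $x_i$ of appropriate degree. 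Concretely, pick any homogeneous $\beta\in H^{2k}(BG)$; then $\beta\cdot(\prod_{\alpha>0}\alpha)\in H^{2(n+k)}(BT)$ is a product of degree-two classes (a polynomial in the $x_i$), and $\pi_!\big(\beta\cdot\prod_{\alpha>0}\alpha\big)=\beta\cdot\pi_!\big(\prod_{\alpha>0}\alpha\big)=c\,\beta$ with $c\neq0$ by the module-basis property and $H^*(BG)$-linearity of $\pi_!$. This exhibits $\beta$ as a fibre integral of a product of degree-two classes, so $\B A^*_G(G/T)=H^*(BG)$, i.e. the subalgebra is full. The "moreover" clause is then immediate from Lemma \ref{L:basic_powers}, which says $\B A^*_G(G/T)$ is already generated by fibre integrals of powers of single degree-two classes (equivalently, of coupling classes, since every degree-two class on $M_G$ is a coupling class).

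The main obstacle, and the step deserving the most care, is justifying that $\pi_!\big(\prod_{\alpha>0}\alpha\big)$ is a nonzero constant and, more broadly, that $\pi_!$ is $H^*(BG)$-linearly surjective — i.e. pinning down the fibre-integration map for $G/T\to BT\to BG$ as the Weyl divided-difference operator and not merely up to unknown kernel. I would handle this by the Leray--Hirsch argument: Hard Lefschetz (via Blanchard's lemma) gives that $j^*\colon H^*(BT)\to H^*(G/T)$ is surjective, so $H^*(BT)$ is a free $H^*(BG)$-module on lifts of a basis of $H^*(G/T)$; the top class of $G/T$ is the image of $\prod_{\alpha>0}\alpha$, and by definition of fibre integration $\pi_!$ sends a lift of the top class to $\pm\operatorname{volume}(G/T)$ and kills the span of lifts of lower classes times $\pi^*H^{>0}(BG)$-free complement — equivalently, $\pi_!(u\cdot\pi^*\beta)=\pi_!(u)\beta$ reduces everything to evaluating $\pi_!$ on the fixed module basis, where it is the identity-up-to-constant on the top and zero below. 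Once this normalization is in hand the surjectivity is formal, as above. A completely self-contained alternative, avoiding the explicit Weyl formula, is: since $H^*(BT)$ is $H^*(BG)$-free with basis $\{b_i\}$ containing the top class $b_0$ of the fibre, write any desired $\beta$ times $b_0$ in the basis, apply $\pi_!$, use linearity, and note only the $b_0$-component survives — this is the cleanest route and I would write it that way.
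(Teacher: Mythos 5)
Your argument is essentially the paper's: both proofs reduce fullness to the surjectivity of $\pi_!\colon H^{2(n+k)}(BT)\to H^{2k}(BG)$ restricted to products of degree-two classes, established via the $H^*(BG)$-module property $\pi_!(\pi^*\beta\cdot u)=\beta\cdot\pi_!(u)$ applied to a degree-$2n$ product of degree-two classes with nonzero fibre integral (the paper uses $\Omega^n$ for a coupling class $\Omega$, you use $\prod_{\alpha>0}\alpha$), together with the observation that $H^*(BT)$ is polynomial on $H^2(BT)$, so that $\pi^*\beta$ times your chosen top class is again a sum of products of degree-two classes. The Weyl divided-difference formula and the Leray--Hirsch/Blanchard discussion are correct background but are not needed; the module property and one nonvanishing fibre integral already do the work.

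One genuine imprecision occurs in the ``moreover'' clause, where you assert that every degree-two class on $M_G=BT$ is a coupling class. This is false: a coupling class must restrict to a symplectic class $\alpha$ with $\alpha^n\neq 0$ on the fibre, and for example on $(SU(2)\times SU(2))/T=S^2\times S^2$ the class $x$ with $x^2=0$ fails this. The correct statement, and the one the paper uses, is that the coupling classes form an open and dense subset of $H^2(BT)$: since $H^2(BG)=0$, the restriction $j^*$ is an isomorphism on $H^2$ and the normalisation $\pi_!(\Omega^{n+1})=0$ holds automatically, so the coupling classes are exactly the complement of the zero locus of the polynomial $a\mapsto j^*(a)^n$. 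Combined with Lemma \ref{L:basic_powers} and Remark \ref{R:powers}, which supply an open and dense set of degree-two classes whose powers detect any given homology class, intersecting the two dense open sets yields generation by powers of coupling classes. The fix is one line, but the parenthetical justification as you wrote it does not stand.
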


\begin{proof}
Let $\dim G/T=2n$ and let $G/T \to BT \stackrel{\pi}\to BG$ be the
universal bundle associated with the action. Consider the following
composition
$$
H^2(BT)\otimes \dots \otimes H^2(BT) 
\to H^{2(n+k)}(BT) \stackrel{\pi_!}\to H^{2k}(BG),
$$ 
where the first map is the product and the second is the fibre
integration. Observe the first map is polynomial and surjective since
$H^*(BT)$ is the polynomial algebra generated by $H^2(BT).$ The second
map is surjective which follows from the injectivity of 
$\pi^*\colon H^*(BG) \to H^*(BT).$ Indeed, if $b \in H^{2k}(BG)$ then
$ \pi_!(\pi^*(b)\cdot \Omega^n)=b, $ for a coupling class 
$\Omega \in H^2(BT).$ This proves that the fibre integral subalgebra
is full.

The second statement follows from Lemma \ref{L:basic_powers} and
Remark \ref{R:powers}.  Indeed, there exists an open and dense subset
of $H^2(BT)$ consisting of coupling classes.
\end{proof}

\subsection{Fibre integral of a power of the coupling class as
an invariant polynomial}

The cohomology of the classifying space of a compact Lie group is
isomorphic to the algebra of invariant polynomials on the Lie algebra
$$
H^{2k}(BG)\cong S^k(\mathfrak g^{\vee})^G.
$$ 
The latter is isomorphic to $S^k(\mathfrak t)^{W_G},$ the polynomials
on the Lie algebra of the maximal torus invariant under the Weyl group
of $G.$
The next lemma follows from \cite[Lemma 3.6 and Lemma 3.9]{MR2115670}.

\begin{lemma}
Let $M=G\cdot \xi$ be a $2n$-dimensional coadjoint orbit of a
semisimple Lie group $G.$ The fibre integral of the $(n+k)$-th power
of the coupling class $\Omega_{\xi}\in H^2(M_G)$ 
corresponds to the following invariant polynomial.
$$
P_{k}(\xi,X):= (-1)^k{{n+k} \choose k}\cdot 
\int_{G}\left<X,\operatorname{Ad}^{\vee}_g(\xi)\right>^k 
\operatorname{vol}_G
$$
\qed
\end{lemma}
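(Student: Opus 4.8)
The plan is to compute the fibre integral of $\Omega_\xi^{n+k}$ explicitly by working with a concrete Chern--Weil type representative of the coupling class, as in \cite{MR2115670}. First I would recall that for the universal Hamiltonian fibration $M\to M_G\to BG$ with moment map data, the coupling class $\Omega_\xi\in H^2(M_G)$ can be represented, via the Borel model or an equivariant de Rham model, by a closed $2$-form built from the invariant symplectic form on the orbit $G\cdot\xi$ together with the equivariant moment map; concretely, over a point of $M_G$ lying over $g\cdot\xi\in M$, the relevant fibrewise-constant component is governed by the pairing $\langle X,\operatorname{Ad}^\vee_g(\xi)\rangle$, where $X\in\mathfrak g$ is the curvature variable. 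This is exactly the content of \cite[Lemma 3.6]{MR2115670}, which identifies the image of $\Omega_\xi$ under the isomorphism $H^{2}(BG)$-module structure with such a moment-map-weighted class.

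Next, the fibre integral $\pi_!(\Omega_\xi^{n+k})$ lands in $H^{2k}(BG)\cong S^k(\mathfrak g^\vee)^G$, so it suffices to evaluate the corresponding invariant polynomial at an arbitrary $X\in\mathfrak g$. Raising the representative of $\Omega_\xi$ to the power $n+k$ and extracting the top fibrewise degree $2n$ part, the $X$-dependence that survives integration over the fibre $M=G\cdot\xi$ is a polynomial of degree $k$ in $X$; the binomial coefficient $\binom{n+k}{k}$ appears because among the $n+k$ factors exactly $k$ contribute the ``horizontal'' moment-map term $\langle X,\operatorname{Ad}^\vee_g(\xi)\rangle$ and the remaining $n$ contribute the fibrewise symplectic form $\omega_\xi^{\,n}/n!$ whose integral over $M$ normalises the measure. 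Identifying the resulting integral over $M$ with an integral over $G$ against the Haar measure $\operatorname{vol}_G$ (using that $M=G/G_\xi$ and that the integrand is $G_\xi$-invariant) produces
\[
P_k(\xi,X)=(-1)^k\binom{n+k}{k}\int_G\bigl\langle X,\operatorname{Ad}^\vee_g(\xi)\bigr\rangle^k\,\operatorname{vol}_G ,
\]
where the sign $(-1)^k$ comes from the sign conventions relating the coupling class to the moment map (equivalently, the normalisation $\pi_!(\Omega_\xi^{n+1})=0$ forces the affine shift of the moment map, and the convention in \cite{MR2115670} carries this sign). This is precisely \cite[Lemma 3.9]{MR2115670} reassembled.

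The main obstacle is bookkeeping the normalisations: matching the de Rham coupling-class representative to its cohomological characterisation (the two conditions $i^*\Omega_\xi=\alpha$ and $\pi_!(\Omega_\xi^{n+1})=0$), tracking the factorials $n!$ and the binomial coefficient through the $(n+k)$-th power, and fixing the sign $(-1)^k$ consistently with the orientation of the fibre and the direction of the moment map. Since all of this is carried out in \cite[Lemmas 3.6 and 3.9]{MR2115670}, the proof here is essentially a citation: one checks that the present normalisation of $\Omega_\xi$ (Section~\ref{SS:c-symplectic}) agrees with the one there, and then the displayed formula for $P_k(\xi,X)$ follows verbatim. I would therefore present the argument as: recall the representative of $\Omega_\xi$ from \cite{MR2115670}; take the $(n+k)$-th power and integrate over the fibre; recognise the moment-map integral as an integral over $G$; and read off $P_k(\xi,X)$, noting that the degree-$k$ homogeneity in $X$ confirms it lies in $S^k(\mathfrak g^\vee)^G\cong H^{2k}(BG)$.
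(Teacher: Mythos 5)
Your proposal is correct and takes essentially the same route as the paper, which simply derives the lemma from Lemmas 3.6 and 3.9 of \cite{MR2115670}; your additional sketch of the moment-map representative, the binomial count, and the passage from integration over $M=G/G_\xi$ to integration over $G$ is a faithful unpacking of what those cited lemmas contain.
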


Since the polynomials $P_k(\xi,X)$ depend continuously (with respect
to the Zariski topology) on $\xi$ and since the algebraic independence is
an open condition we obtain the following result.

\begin{proposition}\label{P:close_orbit}
Let $\xi\in \mathfrak g^{\vee}.$
There exists a Zariski open neighbourhood $Z\subset \mathfrak g^{\vee}$
of $\xi$ such that 
$$\B A^*_G(G\cdot \xi)\subset \B A^*_G(G\cdot \eta)$$ 
for every $\eta \in Z.$ 
\qed
\end{proposition}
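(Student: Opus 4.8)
The plan is to deduce Proposition~\ref{P:close_orbit} from the explicit description of the fibre integral subalgebra provided by the preceding lemma together with Lemma~\ref{L:basic_powers}. By Lemma~\ref{L:flag} and Lemma~\ref{L:basic_powers}, the subalgebra $\B A^*_G(G\cdot\xi)$ is generated by the classes $\pi_!(\Omega_\xi^{n+k})$, which under the isomorphism $H^{2k}(BG)\cong S^k(\mathfrak g^\vee)^G$ are identified with the invariant polynomials $P_k(\xi,X)$. So the statement to prove becomes: the subalgebra of $S^*(\mathfrak g^\vee)^G$ generated by $\{P_k(\xi,\cdot)\}_{k\ge 1}$ is, for $\eta$ in a suitable Zariski-open neighbourhood of $\xi$, contained in the subalgebra generated by $\{P_k(\eta,\cdot)\}_{k\ge 1}$.

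First I would reduce to a finite-dimensional statement degree by degree. Since $H^*(BG)$ is a finitely generated algebra and $\B A^*_G(G\cdot\xi)$ is a subalgebra, it suffices to control finitely many generators, hence finitely many degrees; fix a degree bound $N$ and work inside $\bigoplus_{k\le N}S^k(\mathfrak g^\vee)^G$, a finite-dimensional vector space. Choose a finite spanning set for $\B A^{\le 2N}_G(G\cdot\xi)$ consisting of products $P_{k_1}(\xi,\cdot)\cdots P_{k_r}(\xi,\cdot)$. The coefficients of each $P_k(\xi,X)=(-1)^k\binom{n+k}{k}\int_G\langle X,\operatorname{Ad}^\vee_g\xi\rangle^k\,\vol_G$, expanded in a basis of monomials in $X$, are polynomial — indeed homogeneous of degree $k$ — in the coordinates of $\xi$; hence each of our spanning products has coefficients that are polynomial in $\xi$. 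Thus the matrix $A(\xi)$ whose rows express a chosen maximal linearly independent subfamily of these products in the monomial basis has polynomial entries in $\xi$.

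The key step is then a rank/genericity argument. Having $\B A^*_G(G\cdot\xi)\subset\B A^*_G(G\cdot\eta)$ in degrees $\le 2N$ amounts to: every product $P_{k_1}(\xi,\cdot)\cdots P_{k_r}(\xi,\cdot)$ lies in the span of the $\eta$-products. Equivalently, appending the $\xi$-products as extra rows to the matrix built from the $\eta$-products does not increase the rank. Both matrices have polynomial entries in $(\xi,\eta)$; the condition ``$\operatorname{rank}$ of the combined matrix $\le$ $\operatorname{rank}$ of the $\eta$-matrix'' is the vanishing of all minors of the appropriate size, intersected with the non-vanishing of some minor of the $\eta$-matrix — a Zariski-locally-closed condition, and in fact open along the diagonal $\eta=\xi$ precisely because at $\eta=\xi$ it holds trivially (the two families coincide) and because rank is lower-semicontinuous: the set of $\eta$ where the $\eta$-matrix attains its generic rank is Zariski-open and contains $\xi$ (the generic rank being achieved at $\xi$ after possibly enlarging the family, using Remark~\ref{R:powers}), while the set where all the relevant larger minors vanish is Zariski-closed and contains the diagonal. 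Intersecting over the finitely many degrees $\le 2N$ still gives a Zariski-open $Z\ni\xi$.

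The main obstacle is making precise the claim that the relevant inclusion is a Zariski-\emph{open} condition near $\xi$ rather than merely locally closed: one must argue that the ``rank of the $\eta$-family'' does not jump \emph{up} as $\eta$ moves away from $\xi$ within a neighbourhood, i.e.\ that $\xi$ lies in the open stratum of maximal rank. This is handled exactly by the remark that ``algebraic independence is an open condition'' — more carefully, for each degree the dimension of the span of the $\eta$-products is lower-semicontinuous in $\eta$, so it is $\ge$ its value at $\xi$ on a Zariski-open neighbourhood; combined with the closed condition that the $\xi$-products satisfy the same linear dependence relations as at $\eta=\xi$, one gets the inclusion on a Zariski-open set. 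Everything else — polynomial dependence of $P_k(\xi,\cdot)$ on $\xi$, the translation via the identification with invariant polynomials, the reduction to finitely many degrees — is routine given the lemmas already established.
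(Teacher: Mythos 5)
Your overall strategy---translate everything into the invariant polynomials $P_k$, use their polynomial dependence on $\xi$, and argue by Zariski semicontinuity---is exactly the paper's (whose entire proof is the one-line remark preceding the proposition), but your elaboration has two genuine gaps. First, the generating set is wrong. Lemma \ref{L:basic_powers} says that $\B A^*_G(G\cdot\xi)$ is generated by $\pi_!(a^{n+k})$ for $a$ ranging over \emph{all} of $H^2(M_G)\cong H^2(G\cdot\xi)$, not just by powers of the single coupling class $\Omega_\xi$ (and Lemma \ref{L:flag}, which you cite, concerns only $G/T$). When $b_2(G\cdot\xi)>1$ the relevant family is $\{P_k(\xi',\cdot)\}$ with $\xi'$ ranging over the span of the face of the Weyl chamber containing $\xi$, and with your smaller family the statement you set out to prove is actually false: for $G=G_1\times G_2$ a product of two simple groups, $\B A^4_G(G\cdot\xi)$ is two-dimensional (spanned by the two Killing forms), whereas $P_2(\eta,\cdot)=c_1(\eta_1)K_1+c_2(\eta_2)K_2$ spans a line whose direction genuinely rotates with $\eta$, so $\OP{span}(P_2(\xi,\cdot))\not\subset\OP{span}(P_2(\eta,\cdot))$ for generic $\eta$ arbitrarily Zariski-close to $\xi$.

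Second, the openness argument does not close. Lower semicontinuity of rank gives only the inequality $\dim\B A^{2k}_G(G\cdot\eta)\ge\dim\B A^{2k}_G(G\cdot\xi)$ on a Zariski open set; the \emph{inclusion} of the $\xi$-span in the $\eta$-span is the additional closed condition that the larger minors of the combined matrix vanish, and what you construct is a locally closed set containing $\xi$, not an open one---for a general polynomial family the $\eta$-products simply rotate out of the $\xi$-span. Your attempted repair, that $\xi$ lies in the stratum where the $\eta$-family attains its generic rank, is false in general: Example \ref{E:g24} exhibits an orbit at which a generator vanishes identically, so the rank of the family does drop at special points, and Remark \ref{R:powers} (which concerns a single fixed fibration) does not address this. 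The inclusion, where it holds, should come from a structural containment of generating families---for $\eta$ whose face dominates that of $\xi$ one has $\{P_k(\xi',\cdot):\xi'\in\OP{span}(F_\xi)\}$ contained in the Zariski closure of $\{P_k(\eta',\cdot):\eta'\in\OP{span}(F_\eta)\}$---rather than from a minor computation for the single pair $(\xi,\eta)$. (For the application in Corollary \ref{C:generic} only the dimension inequality is needed, and that part of your argument is sound.)
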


\begin{corollary}\label{C:generic}
If $M=G\cdot \xi\subset \mathfrak g^{\vee}$ is a generic coadjoint orbit of 
a compact semisimple Lie group $G$ then
the fibre integral subalgebra 
$\B A^*_G(M)$ is full.
\end{corollary}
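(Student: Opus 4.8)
The plan is to derive this as a direct consequence of the two preceding results, Lemma \ref{L:flag} and Proposition \ref{P:close_orbit}, using the flag manifold $G/T$ as the "known" case whose fullness propagates to nearby orbits.

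\textbf{Proof proposal.} First I would recall that by Lemma \ref{L:flag}, the fibre integral subalgebra $\B A^*_G(G/T)$ is full, i.e.\ equal to all of $H^*(BG)$. The flag manifold $G/T$ is itself a coadjoint orbit $G\cdot\xi_0$, namely the orbit through a regular element $\xi_0$ lying in the interior of the Weyl chamber $C$ (this is exactly the identification discussed in Section \ref{SS:symplectic}: the isotropy of such $\xi_0$ is the maximal torus $T$). So fullness holds for at least one orbit, the "most generic" one combinatorially. Next I would invoke Proposition \ref{P:close_orbit} with $\xi = \xi_0$: there is a Zariski open neighbourhood $Z \subset \mathfrak g^{\vee}$ of $\xi_0$ such that $\B A^*_G(G\cdot\xi_0) \subset \B A^*_G(G\cdot\eta)$ for every $\eta \in Z$. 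Since the left-hand side is all of $H^*(BG)$ and the right-hand side is a subalgebra of $H^*(BG)$, we conclude $\B A^*_G(G\cdot\eta) = H^*(BG)$ for all $\eta\in Z$, i.e.\ the fibre integral subalgebra is full for every orbit through a point of $Z$. By the definition of \emph{generic} given after Theorem \ref{T:orbits} — a property holds for a generic orbit if it holds for all orbits $G\cdot\xi$ with $\xi$ in some non-empty Zariski open subset — this is precisely the assertion that $\B A^*_G(M)$ is full for a generic coadjoint orbit $M$.

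\textbf{Main obstacle.} There is essentially no obstacle here; the corollary is a formal combination of Lemma \ref{L:flag} and Proposition \ref{P:close_orbit}. The one point that warrants a sentence of care is checking that the regular element $\xi_0$ with $G\cdot\xi_0 = G/T$ actually lies in the Zariski open set to which Proposition \ref{P:close_orbit} applies — but Proposition \ref{P:close_orbit} is stated for an arbitrary $\xi\in\mathfrak g^{\vee}$, so we may simply take $\xi = \xi_0$ and the neighbourhood $Z$ is produced for us. The only mild subtlety is that $Z$ is non-empty (it contains $\xi_0$), which is needed to match the definition of genericity. One could also remark that the open set $Z$ obtained this way need not be described explicitly — consistent with the paper's earlier comment that "the subset $Z$ for a given group $G$ is not understood" — but no further argument is required.
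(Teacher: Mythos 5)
Your proof is correct and follows exactly the paper's own argument: fullness for the flag manifold $G/T$ (Lemma \ref{L:flag}) propagates to a Zariski open set of orbits via Proposition \ref{P:close_orbit}. Your version simply spells out the identification of $G/T$ as the orbit through a regular element, which the paper leaves implicit.
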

\begin{proof}
It follows from Proposition \ref{P:close_orbit} that if there is
an orbit with the full fibre integral subalgebra then a generic
coadjoint orbit has full fibre integral subalgebra. The statement
follows from Lemma~\ref{L:flag}.
\end{proof}

\section{Proofs of Theorem \ref{T:orbits} and Theorem \ref{T:circle}}
\label{S:proofs}

\subsection{Proof of Theorem \ref{T:orbits}}\label{SS:proof_orbits}
According to Corollary \ref{C:generic} we have that the fibre integral
subalgebra $\B A^*_G(M)$ is full for a generic coadjoint orbit $M$.
Then, since $G$ is semisimple its fundamental group is finite, the
first statement follows from Lemma~\ref{L:inequality}.  

To prove the second statement observe first that a compact semisimple
group $G$ is finitely covered by a product $G_1\times \ldots \times G_m$
of simple groups. Thus there is a splitting of the Lie algebra
$\mathfrak g = \mathfrak g_1\oplus \ldots \oplus \mathfrak g_m.$

The composition 
$G_1\times \ldots \times G_m\to G \to \OP{Aut}(\mathfrak g^{\vee})$
of the covering projection and the coadjoint action is the coadjoint
action of the product and hence it is the product of coadjoint
actions $\OP{Ad}^{\vee}\colon G_i \to \OP{Aut}(\mathfrak g_i^{\vee}).$
Thus $M$ is diffeomorphic to a product $M_1\times \ldots \times M_m$
of the corresponding coadjoint orbits of simple groups.

Next we want to show that each of the above orbits $M_i$ is
of positive dimension. This follows from the assumption on
the kernel of the action. Indeed,
since the kernel of the action 
$\OP{Ad}^{\vee}\colon G\to \OP{Aut}(\mathfrak g^{\vee})$
is finite, so is the kernel of the coadjoint action
of the product. But the latter is isomorphic to the product
of the kernels of $\OP{Ad}^{\vee}\colon G_i\to \OP{Aut}(\mathfrak g^{\vee})$
and hence each $M_i$ is a nontrivial orbit.

The statement now follows from Theorem \ref{T:circle} proof of which
we shall present next.  \qed

\subsection{Proof of Theorem \ref{T:circle}}\label{SS:proof_circle}
Let $\dim M = 2n,$ $\dim M_i = 2n_i$ and let 
$\omega_a = \sum a_i\omega_i,$ where $a:=(a_1,\ldots a_m)$ is an
$m$-tuple of positive real numbers. Let $\Omega_a\in H^2(M_{\C H})$ be
the coupling class associated with the symplectic form $\omega_a.$

$$
\xymatrix
{
M_1\times \ldots \times M_m \ar[d] \ar[r]^= & M\ar[d]\\
(M_1)_{\Ham} \times \ldots \times (M_m)_{\Ham} \ar[d]^p 
\ar[r]^{\phantom{uuuuuuuuuuuu}F} & M_{\C H}\ar[d]^{\pi}\\
B{\Ham(M_1,\omega_1)} \times \ldots \times B{\Ham(M_m,\omega_m)} 
\ar[r]^{\phantom{uuuuuuuuuuuuuuuuuu}f} & B{\C H}\\
}
$$
The pull-back of the coupling class $F^*(\Om_a)$ is equal to the
sum of coupling classes $\sum a_i \Omega_i.$ 
In the following computation $C_k$ is a positive constant 
depending on a tuple $k=(k_1,\ldots,k_m)$ of non-negative integers
and the fourth equality follows from the multiplicativity of the fibre
integration with respect to the cross product.
\begin{eqnarray*}
f^*\left(\pi_!\left(\Omega_a\right)^{n+2}\right)
&=& p_!\left (F^*(\Omega_a)^{n+2}\right)\\
&=& p_!\left(\left(\sum a_i\Omega_i\right )^{n+2}\right )\\
&=& \sum C_k\cdot p_!\left ((a_1\Om_1)^{k_1}\times \ldots \times (a_m\Om_m)^{k_m}\right)\\
&=& \sum C_k\cdot (p_1)_!((a_1\Om_1)^{k_1})\times \ldots \times (p_m)_!(a_m\Om_m)^{k_m})\\
\end{eqnarray*} 
Notice that the fibre integral $(p_i)_!(\Om_i^k)=0$ if $k<n_i.$ Thus
the above sum is,
up to a positive constant, equal to the sum of terms of the form
$$
(p_1)_!(a_1\Om_1)^{n_1}\times \ldots \times (p_i)_!(a_i\Om_i)^{n_i+2}\times 
\ldots \times (p_m)_!(a_m\Om_m)^{n_m}.
$$

It follows from \cite[Theorem 1.1]{MR2115670} that an inessential
nontrivial circle action induces an element of 
$\sigma_i\in \pi_4(B\Ham(M_i))$ on which the class
$(p_i)_!(\Omega_i^{n+2})$ evaluates nontrivially.  By varying the
$m$-tuple $a=(a_1,\ldots,a_m)$ we obtain that the image of the
homomorphism
$$
f^*\colon H^4(B\C H)\to 
H^4(B\Ham(M_1,\omega_1)\times \ldots \times B\Ham(M_m,\omega_m))
$$ 
is at least $m$-dimensional. 

Choosing the parameters $a$ appropriately,
the classes $\pi_!(\Omega_a^{n+2})$ define $m$ linearly independent
functionals on $\pi_4(B\C H)\otimes \B R.$ Evaluating them on the
images of the classes $\sigma_i$ we obtain that the
rank $\pi_4(B\C H)$ is at least $m.$ This rank is equal
to the rank of $\pi_3(\C H).$ 
\qed

\section{Examples}\label{S:examples}

Let $G$ be a compact connected semisimple Lie group with a maximal
torus $T.$ Let $\mathfrak g$ and $\mathfrak t$ denote the
corresponding Lie algebras.  The closed positive Weyl chamber
$C\subset \mathfrak t$ is a simplicial cone.  Let $F$ be a face of
$C.$ If $\xi$ and $\eta$ belong to the interior of $F$ then they are
diffeomorphic. Moreover, if $\xi \in \OP{interior}(F)$ then
$$
\dim H^2(G\cdot \xi) = \dim F.
$$ 
Conversely, a coadjoint orbit $G\cdot \xi$ 
has a representative that belongs to the interior of a face
of dimension equal to $\dim H^2(G\cdot \xi).$

\begin{remark}
The above observations can be easily deduced from Bott's results in
\cite{MR0087035} (see also Section 2 of his survey in \cite{MR568880}
and Section 2.3 of Guillemin--Lerman--Sternberg \cite{MR98d:58074}).
\end{remark}

Proposition \ref{P:close_orbit} states that $\B A^*_G(G\cdot \xi)
\subset \B A^*_G(G\cdot \eta)$ for any $\eta$ in an open and dense
neighbourhood $U\subset C$ of $\xi.$ Thus the most interesting are the
orbits corresponding to the edges (i.e. one-dimensional faces) of $C.$
This is because near an edge there are points corresponding to
many topologically distinct orbits.  The orbits corresponding to edges
are characterised by their second Betti number being equal to one.
They are called {\bf \em minimal} in the terminology of
Guillemin--Lerman--Sternberg \cite{MR98d:58074}.  

\begin{example}
The minimal coadjoint orbits of $\OP{SU}(n)$ 
are the complex grassmannians $\OP{G}(k,n).$
\end{example}

\subsection{Special unitary group $\OP{SU}(n)$}\label{SS:sun}
It is known that the fibre integral subalgebra associated
with the natural projective action of $\OP{SU}(n)$ on the
complex projective space $\cp^{n-1}$ is full 
\cite[Proposition 1.7]{MR2115670}.
Thus it follows from Proposition \ref{P:close_orbit}
that every coadjoint orbit $M$ of $\OP{SU}(n)$ close to
the projective space has full fibre integral subalgebra
$\B A^*_{\OP{SU}(n)}(M).$ Such orbits are of the form
$$
\OP{SU}(n)/
\OP{S}\,(\OP{U}(n_1)\oplus \ldots \oplus \OP{U}(n_k)\oplus \OP{U}(1))
$$
where $n_1+\ldots + n_k+1 = n$.

\begin{example}[Proposition 3.6 in \cite{gkt}]\label{E:g24}
The fibre integral subalgebra of the complex grassmannian 
$\OP{G}(n,2n)$ with respect to the natural action
of $\OP{SU}(2n)$ is not full. More precisely, 
$H^6(B\OP{SU}(2n))$ is not contained in the fibre integral
subalgebra. This is due to the fact that the relevant invariant
polynomial has odd degree and its zero represents the
grassmannian (see \cite[Section 3]{gkt} 
for more examples and details).
\end{example}

\begin{remark}
It is shown in \cite[Proposition 4.8]{MR2115670} that
the natural action of $\OP{SU}(n)$ on a generalised flag manifold $M$
induces a surjective homomorphism
$H^*(B\Homeo_0(M)) \to H^*(B\OP{SU}(n)).$ The proof is also an
application of fibre integrals. It is, however, specialised to this
particular case.
\end{remark}

\subsection{Special orthogonal group $\OP{SO}(2m), m>2$}
The cohomology ring of the classifying space of the special orthogonal
group is generated by the Pontryagin classes $p_1,\ldots,p_m$ and the
Euler class $e.$ They have degrees $\deg(p_k)=4k$ and $\deg(e)=2m.$

Consider a minimal coadjoint orbit of the form $M:=\OP{SO}(2m)/\OP{U}(m).$ The
classes $p_1,e,p_{m+1},\ldots,p_{2m}$ belong to the fibre integral
subalgebra $\B A_{\OP{SO}(2m)}^*(M).$ For the first Pontryagin class
this follows from the second part of Theorem \ref{T:orbits}.

To see that the the higher Pontryagin classes $p_k$ for $k>m$ belong
to the fibre integral subalgebra consider the map 
$\pi\colon B\OP{U}(m)\to B \OP{SO}(2m)$ induced by the inclusion. The
pullback of a Pontryagin class is expressed in terms of Chern classes
by the well-known formula
$$
\pi^*(p_k) = c_k^2 - 2c_{k-1}c_{k+1} + \ldots \pm 2c_{2k}.
$$
Thus if $k>m$ then $\pi^*(p_k)$ is a sum of products of classes of
positive degrees and Proposition \ref{P:spheres} applies.

Finally, a result of Reznikov~\cite{KTW,MR2000f:53116} implies that
the Euler class belongs to the fibre integral subalgebra.

Since $H^2(M)=\B R$ there are, in general, orbits close to $M$
that are topologically different from $M.$ Their fibre integral
subalgebras contain $\B A^*_{\OP{SO}(2m)}(M).$

\begin{remark}
We excluded the case of $\OP{SO}(4)$ because then the action
of the group $\OP{SO}(4)$ on the orbit $\OP{SO}(4)/\OP{U}(2)=\cp^1$
is not effective.
\end{remark}

\section{The fibre integral subalgebra is stable}
If $G$ acts on a manifold $M$ then it also acts on
the product $M\times N$ (acting trivially on the second factor).
We think of the composition of actions
$$
f\colon G \to \Homeo(M)\to \Homeo(M\times N)
$$ 

as a kind of stabilisation. 
If $G$ is connected then the second part
of Proposition \ref{P:basic_cup} states that 
$\B A^*_G(M) = \B A_G^*(M\times N)$ and hence
$$
\B A^*_G(M) \subset f^*(B\OP{Homeo}_0(M\times N)).
$$
This means that the part of the topology of the classifying
space of the group of homeomorphisms of $M$ captured by the
fibre integral subalgebra persists when we stabilise $M.$

\begin{example}
Let $M$ be a coadjoint orbit of a semisimple compact Lie group $G$.
Let $N$ be a closed simply connected symplectic manifold.
Then the fibre integral subalgebra $\B A^*_G(M)$ is contained
in the image of the homomorphism
$$
H^*(B\Ham(M\times N))\to H^*(BG)
$$
induced by the action of $G$ on the product.
In particular if the fibre integral subalgebra is full then
the above homomorphism is surjective.
\end{example}

\begin{remark}
The fundamental group of the group of Hamiltonian diffeomorphisms
of a product symplectic manifold has been recently investigated
by Pedroza in \cite{MR2440331}.
\end{remark}

\bibliography{../bib/bibliography}
\bibliographystyle{plain}

\end{document}